\documentclass[review,onefignum,onetabnum]{siamart190516}



\usepackage{lipsum}
\usepackage{amsfonts}
\usepackage{graphicx}
\usepackage{epstopdf}
\usepackage{algorithmic}
\ifpdf
  \DeclareGraphicsExtensions{.eps,.pdf,.png,.jpg}
\else
  \DeclareGraphicsExtensions{.eps}
\fi

\usepackage{bm}
\usepackage{mathtools} 
\usepackage{enumitem} 
\usepackage{comment}
\usepackage{marginnote}
\reversemarginpar
\usepackage{stmaryrd} 
\usepackage{dsfont} 
\usepackage{import} 
\allowdisplaybreaks

\newcommand{\RR}{\mathbb{R}}

\newcommand{\ZZ}{\mathbb{Z}}

\newcommand{\innerp}[1]{\left\langle #1 \right\rangle}
\newcommand{\norm}[1]{\left\Vert #1 \right\Vert}
\newcommand{\abs}[1]{\left\vert #1 \right\vert}

\newcommand{\DD}{\bm{D}} 
\newcommand{\ee}{\bm{e}}

\newcommand{\bg}{\bm{g}}

\newcommand{\kk}{\bm{k}}
\newcommand{\calH}{\mathcal{H}}
\newcommand{\calK}{\mathcal{K}}
\newcommand{\calL}{\mathcal{L}}
\newcommand{\II}{\bm{I}}

\newcommand{\nn}{\bm{n}}
\newcommand{\bP}{\bm{U}} 
\newcommand{\bQ}{\bm{Q}}

\newcommand{\calP}{\mathcal{P}}
\newcommand{\bT}{\bm{T}}
\newcommand{\us}{\mathrm{s}}

\newcommand{\uu}{\bm{u}}
\newcommand{\calA}{\mathcal{N}}
\newcommand{\calU}{\mathcal{U}}

\newcommand{\calV}{\mathcal{V}}

\newcommand{\calQ}{\mathcal{Q}}

\newcommand{\vv}{\bm{v}}
\newcommand{\ww}{\bm{w}}

\newcommand{\xx}{\bm{x}}

\newcommand{\yy}{\bm{y}}

\newcommand{\zz}{\bm{z}}

\newcommand{\emB}{\mathbf{B}}

\newcommand{\emH}{\mathbf{H}}

\newcommand{\tscale}{\xrightharpoonup[]{~2~}}
\newcommand{\per}{\mathrm{per}}

\newcommand{\eff}{\mathrm{eff}}
\newcommand{\nRe}{{R}_{{e}}}

\renewcommand\vec[1]{\bm #1}
\DeclareMathOperator{\supp}{supp}

\DeclareMathOperator*{\argmin}{arg\,min}

\newcommand{\charac}{\mathds{1}}

\providecommand\given{} 
\newcommand\givensymbol[1]{%
  \nonscript\;\delimsize#1\allowbreak\nonscript\;\mathopen{}%
}
\DeclarePairedDelimiterX\Set[1]\{\}{%
  \renewcommand\given{\givensymbol{\vert}}%
  #1%
}



\newsiamremark{remark}{Remark}
\newsiamremark{hypothesis}{Hypothesis}
\crefname{hypothesis}{Hypothesis}{Hypotheses}
\newsiamthm{claim}{Claim}

\headers{Homogenization of Non-dilute Magnetic Suspensions}{T. Dang, Y. Gorb, and S. Jim\'{e}nez Bola\~{n}os}
\title{Homogenization of Non-dilute Suspension of Viscous Fluid with Magnetic Particles\thanks{Submitted to the editors DATE.
\funding{The work of the first two authors was partially funded by DMS-1350248.}}}

\author{
   Thuyen Dang\thanks{Department of Mathematics, University of
    Houston, 4800 Calhoun Road, Houston, Texas 77204
    (\email{ttdang9@central.uh.edu}).} 
  \and
  Yuliya Gorb\thanks{National Science Foundation, 2415 Eisenhower Avenue, Alexandria, Virginia 22314
    (\email{ygorb@nsf.gov}).}
    \and
  Silvia Jim\'{e}nez Bola\~{n}os\thanks{Department of
    Mathematics, Colgate University, 13 Oak Drive, Hamilton, New York 13346
    (\email{sjimenez@colgate.edu}).} 
}

\usepackage{amsopn}

\DeclareMathOperator{\di}{d\!}
\DeclareMathOperator{\Div}{\mathrm{div}}



\ifpdf
\hypersetup{
  pdftitle={Homogenization of a Non-dilute Suspension of a Viscous Fluid with Magnetic Particles},
  pdfauthor={T. Dang, Y. Gorb and S. Jim\'{e}nez Bola\~{n}os}
}
\fi


\externaldocument{ex_supplement}


\nolinenumbers
\begin{document}

\maketitle

\begin{abstract}
This paper seeks to carry out the rigorous homogenization of a particulate flow consisting of a non-dilute suspension of a viscous Newtonian fluid with magnetizable particles. The fluid is assumed to be described by the Stokes flow, while the particles are either \textit{paramagnetic} or \textit{diamagnetic}, for which the magnetization field is a linear function of the magnetic field. The coefficients of the corresponding partial differential equations are locally periodic. A one-way coupling between the fluid domain and the particles is also assumed. The homogenized or effective response of such a suspension is derived, and the mathematical justification of the obtained asymptotics is carried out. The two-scale convergence method is adopted for the latter. As a consequence, the presented result provides a justification for the formal asymptotic analysis of L\'{e}vy and Sanchez-Palencia \cite{levySuspensionSolidParticles1983} for particulate steady-state Stokes flows.
\end{abstract}

\begin{keywords}
  Homogenization, two-scale convergence, viscous flow, coupling, magnetic particles.
\end{keywords}

\begin{AMS}
  76M50, 78M40.
\end{AMS}

\section{Introduction}
\label{sec:introduction-1}

The past two decades have witnessed a dramatic growth of research interest in the \textit{ferrofluids} and \textit{magnetorheological fluids}, e.g. \cite{afkhamiFerrofluidsMagneticallyGuided2017,alougesHomogenizationCompositeFerromagnetic2015,eloulReactiveMomentumTransfer2020,kangDirectSimulationDynamics2013,kangDirectSimulationMethod2008}, that found abundant technological, environmental and medical applications. The most salient applications are magnetic drug targeting \cite{itoMedicalApplicationFunctionalized2005,voltairasHydrodynamicsMagneticDrug2002}, and molecular communication using magnetic nanoparticles \cite{nakanoMolecularCommunicationNetworking2012} for the former, and non-invasive measurements of blood pressure, prosthetic knee, and many others (see
\cite{coonApplicationMagnetoRheologicalFluids2019,gadekarMagnetorheologicalFluidIts} as well as references therein) for the latter. 
Both of these types of \textit{fluids} are colloids consisting of a large number of fairly small magnetizable particles dispersed/suspended in a carrier fluid, which is itself electrically nonconducting, in the presence of strong magnetic fields or strong magnetic gradients (a typical ferro- or magnetorheological fluid also includes a surfactant layer that for simplicity will be ignored hereafter). Under those external forces, the particles slip relative to the ambient fluid and, thereby, exert drag to the latter yielding the overall suspension to move as a whole.  The most important applications of the aforementioned \textit{fluids} result from the possibility of controlling their effective viscosity with an externally applied magnetic field (e.g., magnetorheological fluids can even solidify in the presence of a magnetic field). The major difference between these colloids is in the size of the particles in the suspension (\textit{nanoscale} for ferrofluids, and \textit{micro} for magnetorheological fluids), see
\cite{vekasFerrofluidsMagnetorheologicalFluids2008}, which behave as a homogeneous continuum. Such a small diameter of the particles, therefore, calls for a \textit{macroscopic} (or \textit{effective}, or \textit{averaging}, or \textit{homogenized}, or \textit{upscaled}, or \textit{coarse}) description of the given suspension.

The macroscopic response of highly heterogeneous multiscale media is often modelled phenomenologically, which typically does not lead to a straightforward control over the effective properties, that is essential in the case of the ferro- or magnetorheological fluids. However, if the underlying (micro)structure of heterogeneities is periodic, then the \textit{mathematical theory of homogenization}, that yields a homogenized model depending on the microstructure and properties of the constituents, can be employed.  This rigorous approach can thus lead to the design of new materials with the desired properties.

In the framework of a homogenization method, the studies most directly related to this current contribution are \cite{levySuspensionSolidParticles1983,levyHomogenizationMechanicsNondilute1988,nikaMultiscaleModelingMagnetorheological2020}. More specifically, in \cite{levySuspensionSolidParticles1983},
the behavior of non-dilute suspensions of rigid particles in a Newtonian fluid was studied in the case where the magnetic field was neglected, assuming a locally periodic array of particles. The fluid was described by the stationary Navier-Stokes flow equations, and the motion of particles of an arbitrary shape followed the rigid body motion. Using the formal asymptotics expansion procedure, it was obtained that the homogenized medium was given by a viscous fluid, in general anisotropic, predictably depending on the local microstructure. The evolution equations for the microstructure were also obtained. A similar asymptotic study was developed in  \cite{levyHomogenizationMechanicsNondilute1988}, where a suspension of \textit{magnetized} particles in a Newtonian fluid was considered. As in the previous study, rigid particles of an arbitrary shape were coupled with the stationary Navier-Stokes equation of the carrier fluid with coupling between the fluid flow and the magnetic field imposed through the balances of linear and angular momenta equations, rather than in the fluid or solid phase equations. The method of asymptotic expansions employed in \cite{levyHomogenizationMechanicsNondilute1988} resulted in a set of homogenized equations revealing that, even though the fluid where the magnetized particles were suspended was Newtonian, the effective medium was, in general, non-Newtonian. Note that in \cite{levySuspensionSolidParticles1983,levyHomogenizationMechanicsNondilute1988}, the effect of the external magnetic field was imposed as a volume density force acting on each particle and as a surface density force acting on the boundary of each particle, respectively. 
Later, in \cite{nikaMultiscaleModelingMagnetorheological2020}, the formal method of asymptotic expansions was applied to the quasi-static Maxwell equations coupled with the Stokes equations to capture the effective magnetorheological behaviour given by effective viscosity, and three effective magnetic permeabilities, which all depend on the geometry of the suspension, the volume fraction, the original magnetic permeability of particles, the Alfven number, and the distribution of the particles.  A numerical study, based on the obtained homogenized system, was also developed for a suspension of iron particles in a viscous non-conducting fluid to capture the effect of the chain structures present in the microstructure.

As mentioned above, the approaches adopted in previous homogenization contributions \cite{levySuspensionSolidParticles1983,levyHomogenizationMechanicsNondilute1988,nikaMultiscaleModelingMagnetorheological2020} were \textit{formal}. The goal of the present study is to carry out the \textit{rigorous homogenization analysis} for a suspension of magnetized particles in a slow, viscous flow. To that end, we adopt the  \textit{method of two-scale convergence}, see e.g. \cite{nguetsengGeneralConvergenceResult1989,allaireHomogenizationTwoscaleConvergence1992}. Since the analysis for the full coupling of the Navier-Stokes equations with the Maxwell equations becomes increasingly involved - in terms of necessary existence, regularity, and other kinds of results -, this paper deals with a simplified model of \textit{paramagnetic} or \textit{diamagnetic} particles in a viscous fluid with a one-way coupling between them. 
The paramagnetic model is introduced in \cite{neuringerFerrohydrodynamics1964} (see also \cite[Section 7.2]{tongCambridgeLectureNotesa} or \cite[Chapter 13]{zangwillModernElectrodynamics2013}) and assumes that the \textit{magnetic flux density} $\emB$ is linearly proportional to the \textit{magnetic field strength} $\emH$, i.e. there exists a function $\mu(\xx)$ of the spatial variable $\xx$, such that $\emB = \mu (\xx) \emH$, where the function $\mu(\xx)$ is called the \textit{magnetic permeability}. A similar linear relation holds for diamagnetic particles, see \cite{zangwillModernElectrodynamics2013}. 
For the \textit{microscopic} or \textit{fine-scale} description of the coupled system, we have adopted the set of equations derived in \cite{kangDirectSimulationMethod2008,afkhamiFerrofluidsMagneticallyGuided2017}. Thereby, 
the fluid is assumed to be electrically non-conducting, the particles are inertialess, and the contribution from Brownian motions is negligible. The Stokes law governs the motion of the ambient fluid, while the particles exhibit rigid body motion, and the magnetic force, exerted on the particles due to magnetic fields, is represented by the divergence of the Maxwell stress tensor, which acts as a body force added to the momentum balance equation. The above implies a one-way coupled system of hydrodynamic and magnetic interactions, where the magnetic field alters the fluid flow.  Clearly, the present simplified model is an initial step in the methodical investigation of the  homogenized description of hydrodynamic and magnetic coupling, where the two-way coupling as well as other descriptions of the particles, such as nonlinear \textit{ferromagnetic} ones, see \cite{tongCambridgeLectureNotesa,zangwillModernElectrodynamics2013}, are intended to be studied. 

Finally, the main result of this paper consists of the derivation of the homogenized or effective response of the suspension described above and the rigorous justification of the obtained asymptotics.  The novelty of the results of this paper is that, to the best of our knowledge, a rigorous analytical justification for this type of homogenization has not been obtained yet.

This paper is organized as follows. In Section \ref{sec:formulation}, the formulation of the problem under consideration is given, that includes the coupling of rigid body motion of paramagnetic particles with a viscous fluid at the fine (micro) scale. The dimensional analysis is carried out in this section as well. The main results, which include the homogenized equation, the cell problems, and the corrector result, will be given in Section \ref{sec:results}. Our concluding remarks and comments are summarized in Section \ref{sec:conclusions}.

\section{Formulation}
\label{sec:formulation}
Throughout this paper, the scalar-valued quantities, such as the pressure $p$, are written in usual typefaces, while
vector-valued or tensor-valued fields, such as the velocity $\vec{u}$ and the Cauchy stress tensor $\vec{\sigma}$, are
written in bold. A special case that does not follow this rule is the effective tensor, 
which is always written in
normal font.  Sequences are indexed by numeric superscripts ($\phi^i$), while
elements of vectors or tensors are indexed by numeric subscripts ($x_i$). Finally, the Einstein summation convention is used whenever applicable.

Let $Y\coloneqq (0,1)^d$ be the unit cell in $\RR^d$, for $d = 2$ or $3$. The unit cell $Y$ is decomposed into $Y=Y_s\cup Y_f \cup \Gamma$, where $Y_s$, representing the magnetic inclusion, and $Y_f$, representing the fluid domain, are open sets in $\mathbb{R}^d$, and $\Gamma$ is the smooth, closed interface that separates them.

Furthermore, let $i = (i_1, \ldots, i_d) \in \ZZ^d$ be a vector of indices and $\ee_1, \ldots, \ee_d$ be canonical basis of $\RR^d$. For a fixed small $\varepsilon > 0,$ define the dilated sets: 
\begin{align*}
    Y^\varepsilon_i 
    \coloneqq \varepsilon (Y + i),~~
    Y^\varepsilon_{i,s}
    \coloneqq \varepsilon (Y_s + i),~~
    Y^\varepsilon_{i,f}
    \coloneqq \varepsilon (Y_f + i),~~
    \Gamma^\varepsilon_i 
    \coloneqq \partial Y^\varepsilon_{i,s}.
\end{align*}
Let $\nn_i,~\nn_{\Gamma}$ and $\nn_{\partial \Omega}$ be unit normal vectors on $\Gamma^{\varepsilon}_{i}$ pointing outward $Y^\varepsilon_{i,s}$, on $\Gamma$ pointing outward $Y_{s}$ and on $\partial \Omega$ pointing outward, respectively.

We assume that the magnetic permeability is a $Y$-periodic function $\mu \in L^{\infty}(\RR^d)$:
\begin{align*}
    \mu (\zz + m \ee_{k}) = \mu (\zz), \qquad \forall \zz \in \RR^{d}, \quad \forall m \in \ZZ, \quad \forall k \in \{ 1,\ldots, d \}.
\end{align*} 
Let $\Omega \subset \RR^d$ be a simply connected and bounded domain of class $C^3$, so that the effective velocity $\vec{u}^0$ derived below will be from $C^1(\Omega)^d$, as claimed in \cref{regularity-u0}.
We define
\begin{align*}
    I^{\varepsilon} 
    \coloneqq \{ 
    i \in \ZZ^d \colon Y^\varepsilon_i \subset \Omega
    \},~~
    \Omega_s^{\varepsilon} 
    \coloneqq \bigcup_{i\in I^\varepsilon}
Y_{i,s}^{\varepsilon},~~
    \Omega_f^{\varepsilon} 
    \coloneqq \Omega \setminus \Omega_s^{\varepsilon},~~
    \Gamma^\varepsilon 
    \coloneqq \bigcup_{i \in I^\varepsilon} \Gamma^\varepsilon_i.
\end{align*}
Moreover, $\di \us $ and $\di \Gamma^\varepsilon$ denote the surface measure on $\partial\Omega$ and $\Gamma^\varepsilon$, respectively. We assume further that $\Lambda^{-1} \le \mu(\xx) \le \Lambda$, for all $\xx\in \Omega$ and some $\Lambda > 0$. 


Suppose that $\bg \in H^1({\Omega})^d$, which can be regarded as an external force.
As mentioned in the Introduction, the carrier fluid is described by the Stokes equation. To that end, denote by $\eta>0$ and $\rho_f>0$ the fluid viscosity and the fluid density, respectively.  Let $\uu^{\varepsilon}$ and $p^{\varepsilon}$ 
 be the fluid velocity and the fluid pressure, respectively. Also, in a space free of current, the magnetic field strength is given by $\emH^{\varepsilon} = \nabla \varphi^{\varepsilon}$, for some magnetic potential $\varphi^{\varepsilon}(\xx)$. 
We are looking for the functions $\uu^{\varepsilon} \in H_0^1(\Omega)^d$, $p^{\varepsilon}\in L^2(\Omega)/\RR$, and $\varphi^{\varepsilon} \in H^1(\Omega)/\RR$ satisfying the following boundary value problem:
\begin{subequations}
  \label{eq:pn410} 
\begin{align}
\label{eq:pn411}
  -\Div \left[ \vec{\sigma}(\uu^{\varepsilon},p^{\varepsilon}) +
  \bT(\varphi^{\varepsilon}) \right]
  &= \rho_f\, \bg, && \text{ in } \Omega_f^{\varepsilon}\\
  \label{eq:pn419}
  \Div \uu^{\varepsilon}
  &= 0, && \text{ in } \Omega_f^{\varepsilon}\\
  \label{eq:pn420}
 \DD(\uu^{\varepsilon}) 
  &= 0, && \text{ in }\Omega_s^{\varepsilon} \\
  \label{eq:pn421}
  -\Div \left[ \mu \left( \frac{\xx}{\varepsilon} \right) \nabla
  \varphi^{\varepsilon} \right]
  &= 0 && \text{ in }\Omega,
\end{align}
\end{subequations}
 where:
\begin{align*}
\DD(\uu^{\varepsilon}) 
&\coloneqq \frac{\nabla \uu + \nabla^{\top}\uu}{2},\qquad
  \vec{\sigma}(\uu^{\varepsilon},p^{\varepsilon})
  \coloneqq 2 \eta \DD(\uu^{\varepsilon}) - p^{\varepsilon}\II,\\
  \bT(\varphi^{\varepsilon})
  &\coloneqq  \mu \left( \frac{\xx}{\varepsilon} \right) \left( \nabla
    \varphi^{\varepsilon} \otimes \nabla \varphi^{\varepsilon} -
    \frac{1}{2} \abs{\nabla \varphi^{\varepsilon}}^2 \II \right),
\end{align*}
are the \textit{rate of strain}, the \textit{Cauchy stress} and the \textit{Maxwell stress} tensors, respectively.
 We briefly explain the physics behind equations \eqref{eq:pn410}:
 \begin{itemize}[wide]
    \item Equation \eqref{eq:pn411} is the conservation of momentum, and $\bg$ in the right-hand side of the equation can be regarded as the \textit{gravity force}.
     \item Equation \eqref{eq:pn421} carries out information of \emph{no magnetic monopoles} ($\Div \mathbf{B} = 0$) and the \emph{linear constitutive relation} ($\mathbf{B}=\mu \mathbf{H}$).
     \item Equation \eqref{eq:pn420} establishes that $\Omega_s^\varepsilon$ is a \emph{rigid region}, which is equivalent (see \cite[Theorem 3.2]{saramitoComplexFluids2016}) to $\uu^\varepsilon \vert_{Y_{i,s}^\varepsilon} = \mathbf{U}_i + \mathbf{R}_i \times (\xx - \mathbf{C}_i)$, where $\mathbf{C}_i$ is the center of mass of particle $Y_{i,s}^\varepsilon$, and the constant translational velocity $\mathbf{U}_i$ and rotational velocity $\mathbf{R}_i$  are Lagrange multipliers associated to the constraints:
 \end{itemize}
\begin{align}
\label{eq:pn414}
\int_{\Gamma_{i}^{\varepsilon}} \left[
  \vec{\sigma}(\uu^{\varepsilon},p^{\varepsilon}) +
  \bT(\varphi^{\varepsilon}) \right] \nn_i \di \Gamma^{\varepsilon}
  = 0;\quad
  \int_{\Gamma_{i}^{\varepsilon}}  \left[
  \vec{\sigma}(\uu^{\varepsilon},p^{\varepsilon}) +
  \bT(\varphi^{\varepsilon}) \right]\nn_i \times \nn_i \di \Gamma^{\varepsilon}
  = 0.
\end{align}
The constraints above 
are the balance equations for drag forces and torques on the particle boundaries, respectively. 

Finally, for a given \emph{divergence-free} $\kk \in H^1({\Omega})^d$, satisfying the compatibility condition $\int_{\partial\Omega} \kk \cdot \nn_{\partial \Omega} \di \us=0$, we consider the following boundary conditions:
\begin{align}
\label{eq:pn416}
  \uu^{\varepsilon}
  &= 0 \text{ on }\partial \Omega, \qquad \text{ and }\qquad
  \left( \mu \nabla \varphi^{\varepsilon} \right)\cdot \nn_{\partial \Omega}
  = \kk \cdot \nn_{\partial\Omega} \text{ on }\partial \Omega.
\end{align}

To write the variational formulation of problem \eqref{eq:pn410}-\eqref{eq:pn416}, we introduce the following space:
\begin{align}
\label{spaceU}
  \calU^{\varepsilon}
  &\coloneqq  \left\{
    \vv \in H_0^1(\Omega)^d \colon \DD(\vv) = 0 \text{ in }
    \Omega_s^{\varepsilon}, ~\Div \vv = 0 \text{ in }\Omega_f^{\varepsilon}
    \right\}.
\end{align}
It can be shown that the space $\calU^\varepsilon$ is a Hilbert space.
Equation \eqref{eq:pn410} is derived from the variational problem: 
\begin{align*}
    \uu^\varepsilon = \argmin_{\substack{\vv^\varepsilon
    \in\, \calU^\varepsilon\\ \varphi^\varepsilon \in\, \argmin \mathcal{E}_2 } } \mathcal{E}_1 (\vv^\varepsilon), \qquad \text{ where: }
\end{align*}
\begin{align*}
    \mathcal{E}_1 (\uu^\varepsilon)
    &\coloneqq
    \eta \int_{\Omega^\varepsilon_f} \DD(\uu^\varepsilon) : \DD(\uu^\varepsilon) \di \xx -{\int_{\Omega^\varepsilon_f} \rho_f \bg \cdot \uu^\varepsilon \di \xx} + \int_{\Omega^\varepsilon_f} \bT(\varphi^\varepsilon) : \DD(\uu^\varepsilon) \di \xx,\\
    \mathcal{E}_2 (\varphi^\varepsilon)
    &\coloneqq
    \frac{1}{2} \int_\Omega \mu \nabla \varphi^\varepsilon \cdot \nabla \varphi^\varepsilon \di \xx - \int_{\partial\Omega} \left(\kk \cdot \nn_{\partial\Omega}\right) \varphi^\varepsilon \di \xx.
\end{align*}

\subsection*{Dimensional Analysis}
\label{sec:dimensional-analysis}
 
Let $L, U, B, \mu_c$ be our characteristic scales corresponding to
length, velocity, magnetic field and magnetic permeability, respectively. Let
$\xx^{*} \coloneqq \frac{\xx}{L}, \uu^{*} \coloneqq \frac{\uu}{U},
p^{*} \coloneqq \frac{p L}{\eta U}, \bg^{*} \coloneqq \frac{\bg
  L}{U^2}, \mu^{*} \coloneqq \frac{\mu}{\mu_c}$ and
$\varphi^{\varepsilon *}\coloneqq
\frac{\varphi^{\varepsilon}}{B L}$.
The dimensionless quantities that appear are the (hydrodynamic) \emph{Reynolds number}
$\nRe = \frac{\rho_f UL}{\eta}$ and the \emph{Alfven number}
$S = \frac{B^2L}{\eta \mu_c U}.$ 
The non-dimensional version of \eqref{eq:pn411} is: 
\begin{align}
\label{eq:pb410}
 -\Div^{*} \left[ \vec{\sigma}^{*}(\uu^{\varepsilon * },p^{\varepsilon *}) +
  \bT^{*}(\varphi^{\varepsilon *}) \right] = \nRe \bg^{*}.
\end{align}
In the Stokes regime, we have $\nRe \ll 1$, so for simplicity, we assume that the right hand side of \eqref{eq:pb410} vanishes (see also \cref{sec:dimensional-analysis-1}). We
obtain the non-dimensional version of \eqref{eq:pn410} (dropping the
$*$ for clarity of exposition):
\begin{subequations}
  \label{eq:p410} 
\begin{align}
\label{eq:p411}
  -\Div \left[ \vec{\sigma}(\uu^{\varepsilon},p^{\varepsilon}) +
  \bT(\varphi^{\varepsilon}) \right]
  &= 0, && \text{ in } \Omega_f^{\varepsilon}\\
  \label{eq:p419}
  \Div \uu^{\varepsilon}
  &= 0, && \text{ in } \Omega_f^{\varepsilon}\\
  \label{eq:p420}
\DD(\uu^{\varepsilon}) 
  &= 0, &&  \text{ in }\Omega_s^{\varepsilon} \\
  \label{eq:p421}
  -\Div \left[ \mu \left( \frac{\xx}{\varepsilon} \right) \nabla
  \varphi^{\varepsilon} \right]
  &=  0 && \text{ in }\Omega,
\end{align}
\end{subequations}
together with the balance equations:
\begin{align}
\label{eq:p413}
\int_{\Gamma_{i}^{\varepsilon}} \left[
  \vec{\sigma}(\uu^{\varepsilon},p^{\varepsilon}) +
  \bT(\varphi^{\varepsilon}) \right] \nn_i\di \Gamma^{\varepsilon}
  =0=
  \int_{\Gamma_{i}^{\varepsilon}} \left( \left[
  \vec{\sigma}(\uu^{\varepsilon},p^{\varepsilon}) +
  \bT(\varphi^{\varepsilon}) \right]\nn_i \right) \times \nn_i\di \Gamma^{\varepsilon},
\end{align}
and boundary conditions:
\begin{subequations}
  \label{eq:p415} 
\begin{align}
\label{eq:p416}
  \uu^{\varepsilon}
  &= 0, &&\text{ on }\partial \Omega,\\
  \label{eq:p423}
  \left( \mu \nabla \varphi^{\varepsilon} \right)\cdot \nn_{\partial \Omega}
  &= \kk \cdot \nn_{\partial \Omega}
  , &&\text{ on }\partial \Omega,
\end{align}
\end{subequations}
where
\begin{align}
\label{eq:p426}
  \vec{\sigma}(\uu^{\varepsilon},p^{\varepsilon})
  &\coloneqq 2 \DD(\uu^{\varepsilon}) - p^{\varepsilon}\II,\quad
  \bT(\varphi^{\varepsilon})
  \coloneqq S \mu 
  \left( \nabla
    \varphi^{\varepsilon} \otimes \nabla \varphi^{\varepsilon} -
    \frac{1}{2} \abs{\nabla \varphi^{\varepsilon}}^2 \II \right).
\end{align}

\begin{remark}
\label{sec:dimensional-analysis-1}
If the Reynolds number is not small, then one has to keep $\nRe \bg$
on the right hand side of \eqref{eq:pb410}.
During the homogenization process, one would need to consider at least three cases, corresponding to the weak limits of subsequences of
$\mathds{1}_{\Omega^{\varepsilon}_f}$. In some cases, one may encounter a \emph{strange term coming from nowhere}, see
\cite{cioranescuStrangeTermComing1997}. 
\end{remark}

\section{Main Results}
\label{sec:results}

Before formulating the main theorem, we will discuss regularity, which is essential for the existence of solutions. First of all, the
existence of the magnetic potential $\varphi^{\varepsilon}$ is straightforward via the Lax-Milgram~Theorem (see \cref{sec:magn-equat-4}).  However, to prove the existence of the velocity
$\uu^{\varepsilon}$, we need some extra 
technical
assumptions on the regularity of the permeability $\mu$ and the domain
$\Omega$. To that end, we note that $\kk \in H^1(\Omega)^d \subset L^6(\Omega)^d$. Now, from \cite[Theorem
2]{byunConormalDerivativeProblem2005} (see also \cite[Theorem 1.2]{caoGradientEstimatesWeak2019a}), we have the following
regularity result: 
\begin{lemma}
  \label{sec:coupled-equation-6}
 Let $\Omega$ be a given domain in $\mathbb{R}^d$ with a smooth boundary, and suppose $s \in (4,6]$. Then there exists a small number
  $\delta = {\delta (\Lambda, s, \Omega)} > 0$ so that, if
  $ \mathrm{ess} \sup \mu - \mathrm{ess} \inf \mu \le \delta,$
  one has:
\begin{align}
\label{eq:493}
\int_{\Omega} \abs{\nabla \varphi^{\varepsilon}}^{s} \di \xx \le C
  \int_{\Omega} \abs{\kk}^{s} \di \xx,
\end{align}
where the constant $C>0$ is independent of $\varepsilon,~\varphi^{\varepsilon}$ and
{$\kk$}.
\end{lemma}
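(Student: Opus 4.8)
The plan is to recast the equation for $\varphi^{\varepsilon}$ as a \emph{conormal derivative problem} with right-hand side in divergence form and then to invoke the $W^{1,s}$-regularity theorem of \cite[Theorem 2]{byunConormalDerivativeProblem2005} (or \cite[Theorem 1.2]{caoGradientEstimatesWeak2019a}). The decisive point is that the smallness-of-oscillation assumption on $\mu$ is \emph{scale invariant}, so it passes to $\mu(\cdot/\varepsilon)$ with a threshold $\delta$ and a constant $C$ that do not depend on $\varepsilon$.

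First I would record the weak formulation: $\varphi^{\varepsilon}\in H^1(\Omega)/\RR$ (normalized to zero mean) solves
\[
\int_{\Omega}\mu(\xx/\varepsilon)\,\nabla\varphi^{\varepsilon}\cdot\nabla v\,\di\xx=\int_{\partial\Omega}(\kk\cdot\nn_{\partial\Omega})\,v\,\di\us\qquad\text{for all }v\in H^1(\Omega),
\]
with existence and uniqueness from Lax--Milgram together with the compatibility condition $\int_{\partial\Omega}\kk\cdot\nn_{\partial\Omega}\,\di\us=0$. Since $\kk$ is divergence free, the divergence theorem turns the boundary term into $\int_{\Omega}\kk\cdot\nabla v\,\di\xx$, so $\varphi^{\varepsilon}$ is a weak solution of $\Div\!\left(\mu(\xx/\varepsilon)\nabla\varphi^{\varepsilon}-\kk\right)=0$ in $\Omega$ with homogeneous conormal condition $\left(\mu(\xx/\varepsilon)\nabla\varphi^{\varepsilon}-\kk\right)\cdot\nn_{\partial\Omega}=0$ on $\partial\Omega$. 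This is exactly the form $\Div(A\nabla u)=\Div\FF$ treated in the cited works, with symmetric coefficient $A(\xx)=\mu(\xx/\varepsilon)\II$ satisfying $\Lambda^{-1}\abs{\xi}^2\le A\xi\cdot\xi\le\Lambda\abs{\xi}^2$ and datum $\FF=\kk\in L^s(\Omega)^d$ (recall $\kk\in H^1(\Omega)^d\subset L^6(\Omega)^d$, hence $\kk\in L^s$ for every $s\in(4,6]$).

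Next I would check the small-BMO (``$(\delta,R)$-vanishing'') hypothesis on $A$ uniformly in $\varepsilon$: for every ball $B_r(\xx_0)$ and every $\varepsilon>0$,
\[
\frac{1}{\abs{B_r}}\int_{B_r(\xx_0)}\Bigl|\,\mu(\xx/\varepsilon)-\tfrac{1}{\abs{B_r}}\!\int_{B_r(\xx_0)}\!\mu(\yy/\varepsilon)\,\di\yy\,\Bigr|\,\di\xx\;\le\;\operatorname{ess\,sup}\mu-\operatorname{ess\,inf}\mu\;=:\;\omega(\mu),
\]
so the BMO seminorm of $A$ at \emph{all} scales is bounded by $\omega(\mu)$, independently of $\varepsilon$. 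The quoted theorems provide a threshold $\delta_0=\delta_0(\Lambda,d,\Omega,s)>0$ --- a $C^3$ domain being in particular of class $C^1$, hence flat enough for the geometric hypothesis --- such that, whenever the coefficient has BMO seminorm at most $\delta_0$ up to a fixed scale $R_0(\Omega)$, the solution satisfies $\norm{\nabla u}_{L^s(\Omega)}\le C\norm{\FF}_{L^s(\Omega)}$ with $C=C(\Lambda,d,\Omega,s)$. Choosing $\delta:=\delta_0$, the hypothesis $\omega(\mu)\le\delta$ forces $\omega(\mu(\cdot/\varepsilon))\le\delta$ for every $\varepsilon$, so the estimate applies to $u=\varphi^{\varepsilon}$, $\FF=\kk$ with $C$ independent of $\varepsilon$; this is \eqref{eq:493}.

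The step I expect to require the most care is confirming that the quoted $W^{1,s}$ estimate is available for the \emph{conormal} (Neumann-type) problem under the geometric and small-oscillation hypotheses in precisely this scale-uniform form, rather than under a continuity assumption on the coefficients --- which $\mu(\cdot/\varepsilon)$ fails, being discontinuous across $\Gamma^{\varepsilon}$ with a modulus of continuity that blows up as $\varepsilon\to0$. This is exactly why the smallness of $\omega(\mu)$ is imposed: it replaces continuity by a scale-invariant condition and thereby detaches $\delta$ and $C$ from $\varepsilon$. No circularity arises in the a priori regularity, since $\varphi^{\varepsilon}\in H^1(\Omega)$ is already known and, as $s>2$, the $W^{1,s}$ theory merely upgrades integrability.
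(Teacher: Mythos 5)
Your proposal is correct and follows the same route as the paper, which states the lemma as a direct consequence of \cite[Theorem 2]{byunConormalDerivativeProblem2005} (and \cite[Theorem 1.2]{caoGradientEstimatesWeak2019a}) without further detail. You usefully fill in the two points the paper leaves implicit --- rewriting the Neumann data as the divergence-form source $\Div(\mu(\cdot/\varepsilon)\nabla\varphi^{\varepsilon}-\kk)=0$ with homogeneous conormal condition, and observing that the BMO seminorm of $\mu(\cdot/\varepsilon)$ is bounded by $\operatorname{ess\,sup}\mu-\operatorname{ess\,inf}\mu$ at every scale and every $\varepsilon$, so the threshold $\delta$ and the constant $C$ are genuinely $\varepsilon$-independent.
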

\begin{remark}
The above result also holds when $\Omega$ is only Lipschitz with a small Lipschitz constant (see \cite{toroDoublingFlatnessGeometry1997} for an estimate).  In the most general setting, we only need to assume $\mu$ is $(\delta, R)$-vanishing and $\Omega$ is $(\delta,R)$-Reifenberg flat; for more details, we refer the reader to \cite{byunConormalDerivativeProblem2005, toroDoublingFlatnessGeometry1997}.
\end{remark}
\begin{remark}
\label{sec:homogenization-2}
Although $s = 4$ is enough for the existence of $\uu^{\varepsilon}$, as one can see in \cref{sec:coupled-equation-2}, the higher regularity, namely $s > 4$, is needed in order to prove the corrector result in
\cref{sec:cell-probl-corr-2}.
\end{remark}

In the sequel, we will use the following functional spaces: 
\begin{itemize}[wide]
    \item $C_{\per}(Y)$ -- the subspace of $C(\RR^d)$ of $Y$-periodic functions;
    \item $C^{\infty}_{\per}(Y)$ -- the subspace of $C^{\infty}(\RR^d)$ of $Y$-periodic functions;
    \item $H^1_{\per}(Y)$ -- the closure of $C^{\infty}_{\per}(Y)$ in the $H^1$-norm;
    \item $\mathcal{D}(\Omega, C^\infty_{\per}(Y))$ -- the space  infinitely differentiable functions from $\Omega$ to $C^\infty_{\per}(Y)$, whose  support is a compact set of $\mathbb{R}^d$ contained in $\Omega$.
   
    \item $L^2_{\per}\left(Y, C(\bar{\Omega})\right)$ -- the space of measurable functions $w \colon \yy \in Y \mapsto w(\cdot,\yy) \in C(\bar{\Omega})$, such that 
    $w$ is periodic with respect to $\yy$ and
    $
    \int_{Y} \left(\sup_{\xx \in \bar{\Omega}} \abs{w(\xx,\yy)}\right)^2 \di \yy 
    < \infty.
    $
    \item $L^p(\Omega, X)$ -- where $X$ is a Banach space and $1 \le p \le \infty$ -- the space of measurable functions $w \colon \xx \in \Omega \mapsto w(\xx) \in X$ such that
    $
    \norm{w}_{L^p(\Omega, X)}
    \coloneqq \left(\int_{\Omega} \norm{w(\xx)}^p_{X} \di \xx\right)^\frac{1}{p} < \infty.
    $
    
\end{itemize}

\begin{definition}
A sequence $\{ v^\varepsilon \}_{\varepsilon>0}$ in $L^2(\Omega)$ is said to \emph{two-scale converge} to $v = v(\xx,\yy)$, with $v \in L^2 (\Omega \times Y)$, and we write $v^\varepsilon \tscale v$, if and only if:
\begin{align}
\label{2sc}
    \lim_{\varepsilon \to 0} \int_\Omega v^\varepsilon(\xx) \psi \left( \xx, \frac{\xx}{\varepsilon}\right) \di \xx 
    = \frac{1}{\abs{Y}} \int_\Omega \int_Y v(\xx,\yy) \psi(\xx,\yy) \di \yy \di \xx,
\end{align}
for any test function $\psi = \psi (\xx, \yy)$ with $\psi \in \mathcal{D}(\Omega, C_\per^\infty (Y))$,
see \cite{allaireHomogenizationTwoscaleConvergence1992,cioranescuIntroductionHomogenization1999,nguetsengGeneralConvergenceResult1989}.
\end{definition}

Note that, by density, if $v^\varepsilon \tscale v$, then \eqref{2sc} holds for any $\psi $ in $ L^2_{\per} \left( Y, C(\bar{\Omega})\right)$ or $L^2\left(\Omega, C_{\per}(Y)\right)$, see e.g. \cite{allaireHomogenizationTwoscaleConvergence1992}. In the sequel, we will use the subscript $\cdot_{\yy}$ to denote the derivative with respect to the second variable $\yy$. We now state our main result in \cref{thm:main}, whose proof will rely on several lemmas discussed over the next sections.
\begin{theorem}[Main theorem]
\label{thm:main}
Suppose that $\mu$ and $\Omega$ satisfy the assumptions in \cref{sec:formulation} and  \cref{sec:coupled-equation-6}. Then, the solution
triple $(\varphi^{\varepsilon}, \uu^{\varepsilon},p^{\varepsilon}) \in (H^1(\Omega)/\RR) \times H_0^1(\Omega)^d \times (L^2(\Omega)/\RR)$
of \eqref{eq:p410} two-scale converges to the unique solution
$(\varphi^0,\uu^0,p^0) \in (H^1(\Omega)/\RR) \times H_0^1(\Omega)^d \times
L^2(\Omega \times Y)/\RR$ of the following ``two-scale homogenized problem":
\begin{subequations}
\label{eq:p444}
  \begin{align}
    \label{eq:p459}
    -\Div \left[ \frac{1}{\abs{Y}}\int_Y \mu(\yy) \left( \nabla \varphi^0 +
    \nabla_{\yy} \varphi^1 \right)\di \yy \right]
    &= {0}, \text{ in } \Omega,\\
    -\Div_{\yy} \left[ \mu(\yy) \left( \nabla \varphi^0 + \nabla_{\yy}
    \varphi^1\right) \right]
    &=0, \text{ in } \Omega \times Y,\\
    \left[\frac{1}{\abs{Y}} \int_Y \mu (\yy) \left( \nabla \varphi^0 +
    \nabla_{\yy}\varphi^1 \right)\di \yy \right]\cdot \nn_{\partial \Omega}
    &={\kk \cdot \nn_{\partial\Omega}}, \text{ on }\partial \Omega,\\
    \label{eq:p476}
    -\Div \left[ \frac{1}{\abs{Y}}\int_Y \left( \vec{\sigma}^0 + \bT^0 \right) \di \yy \right]
    &= 0, \text{ in } \Omega,\\
    \label{eq:p477}
    \Div_{\yy} \left( \vec{\sigma}^0 + \bT^0 \right)
    &=0, \text{ in } \Omega \times Y_f,\\
    \label{eq:494}
    \Div \uu^0
    &= 0, \text{ in }\Omega,\\
    \label{eq:a494}
    \Div_{\yy} \uu^1 
    &= 0, \text{ in }\Omega \times Y,\\
    \label{eq:496}
    \DD(\uu^0) + \DD_{\yy}(\uu^1)
    &= 0, \text{ in } \Omega \times Y_s,\\
  \label{bdr-cell2}
  {\int_{\Omega} \int_{\Gamma} \left(
  \vec{\sigma}^0 +
  \bT^0 \right) \nn_{\Gamma}\di \Gamma_{\yy} \di \xx}
  &= 0,\\ 
  \label{bdr-cell3}
  {\int_{\Omega} \int_{\Gamma} \left[ \left(
  \vec{\sigma}^0 +
  \bT^0 \right)\nn_{\Gamma} \right] \times \nn_{\Gamma}\di \Gamma_{\yy} \di \xx}
  &= 0, 
  \end{align}
\end{subequations}
with the constitutive laws: 
\begin{align}
\label{eq:p475}
  \vec{\sigma}^0
  &\coloneqq
    2 \left[
    \DD(\uu^0) + \DD_{\yy}(\uu^1) \right]-p^0\II ~ \text{ in
    }\Omega,\\
  \label{eq:502}
  \bT^0
  &\coloneqq
    S \mu \left( \left[ \nabla \varphi^0 + \nabla_{\yy} \varphi^1
    \right] \otimes \left[ \nabla \varphi^0 + \nabla_{\yy}\varphi^1
    \right] - \frac{1}{2} \abs{\nabla \varphi^0 +
    \nabla_{\yy}\varphi^1}^2 \II \right) ~ \text{ in }\Omega,
\end{align}
where $\uu^1 \in L^2(\Omega, H_{\per}^1(Y)^d/\RR),~\varphi^1 \in
L^2(\Omega, H^1_{\per}(Y)/\RR)$ are given by the two-scale limits 
$
  \nabla \uu^{\varepsilon}
  \tscale \nabla \uu^0 + \nabla_{\yy}
  \uu^1 \text{ and }
  \nabla \varphi^{\varepsilon}
  \tscale \nabla \varphi^0 + \nabla_{\yy}\varphi^1.
  $
\end{theorem}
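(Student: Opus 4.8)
The plan is the standard two-scale convergence program — $\varepsilon$-uniform a priori estimates, extraction of two-scale limits, passage to the limit against suitably oscillating test functions, and identification plus uniqueness of the limit problem — the two features demanding extra care being the \emph{quadratic} coupling through the Maxwell stress $\bT(\varphi^{\varepsilon})$ and the microscopic rigid-body constraint $\DD(\uu^{\varepsilon})=0$ in $\Omega_s^{\varepsilon}$ that admissible test functions must respect. First I would establish the bounds. Testing \eqref{eq:p421} with $\varphi^{\varepsilon}$ and using $\Lambda^{-1}\le\mu\le\Lambda$ together with the compatibility condition on $\kk$ gives $\norm{\varphi^{\varepsilon}}_{H^1(\Omega)/\RR}\le C$; \cref{sec:coupled-equation-6} then upgrades this to $\norm{\nabla\varphi^{\varepsilon}}_{L^s(\Omega)}\le C$ with $s>4$, so that $\abs{\nabla\varphi^{\varepsilon}}^2$ is bounded in $L^{s/2}$ with $s/2>2$, hence equi-integrable, and $\bT(\varphi^{\varepsilon})$ is a bounded, equi-integrable forcing term. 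Testing \eqref{eq:p411} with $\uu^{\varepsilon}\in\calU^{\varepsilon}$ and using a Korn inequality on $\calU^{\varepsilon}$ uniform in $\varepsilon$ (the rigid inclusions only help) gives $\norm{\uu^{\varepsilon}}_{H^1_0(\Omega)}\le C$. The bound $\norm{p^{\varepsilon}}_{L^2(\Omega)/\RR}\le C$ follows from the usual duality argument: build, via a Bogovskii-type operator on $\Omega_f^{\varepsilon}$ with $\varepsilon$-uniform norm (a Tartar-type restriction operator for periodically perforated domains), a velocity whose divergence equals $p^{\varepsilon}$ in the fluid, test the momentum balance against it, and control the right-hand side by the bounds just obtained.

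Next, by the two-scale compactness theorem, extract (up to a subsequence) $\varphi^{\varepsilon}\tscale\varphi^0(\xx)$ with $\nabla\varphi^{\varepsilon}\tscale\nabla\varphi^0+\nabla_{\yy}\varphi^1$, $\varphi^1\in L^2(\Omega,H^1_{\per}(Y)/\RR)$, and likewise $\uu^{\varepsilon}\tscale\uu^0(\xx)$, $\nabla\uu^{\varepsilon}\tscale\nabla\uu^0+\nabla_{\yy}\uu^1$, $p^{\varepsilon}\tscale p^0(\xx,\yy)$; the divergence-free and rigidity constraints pass to the two-scale limit, giving $\Div\uu^0=0$, $\Div_{\yy}\uu^1=0$, and $\DD(\uu^0)+\DD_{\yy}(\uu^1)=0$ in $\Omega\times Y_s$. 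Since the coupling is one-way, the magnetic subsystem decouples: testing \eqref{eq:p421} against $\psi_0(\xx)+\varepsilon\psi_1(\xx,\xx/\varepsilon)$ with $\psi_0\in C^\infty_c(\Omega)$ and $\psi_1\in\mathcal{D}(\Omega,C^\infty_{\per}(Y))$ and letting $\varepsilon\to0$ produces the first three equations of \eqref{eq:p444}. Choosing $\psi_0=\varphi^0$, $\psi_1=\varphi^1$ by density yields convergence of the magnetic energies $\int_\Omega\mu(\xx/\varepsilon)\abs{\nabla\varphi^{\varepsilon}}^2\di\xx\to\frac1{\abs{Y}}\int_\Omega\int_Y\mu(\yy)\abs{\nabla\varphi^0+\nabla_{\yy}\varphi^1}^2\di\yy\di\xx$, which, combined with the weak two-scale convergence, promotes $\sqrt{\mu(\cdot/\varepsilon)}\,\nabla\varphi^{\varepsilon}$ to \emph{strong} two-scale convergence; together with the equi-integrability of $\abs{\nabla\varphi^{\varepsilon}}^2$ furnished by \cref{sec:coupled-equation-6}, this is exactly what is needed to pass to the limit in the quadratic Maxwell stress, $\bT(\varphi^{\varepsilon})\tscale\bT^0$ with $\bT^0$ given by \eqref{eq:502}.

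Finally, pass to the limit in the fluid–solid system. Admissible test functions must lie in $\calU^{\varepsilon}$, so I would use fields $\vv^{\varepsilon}(\xx)=\vv_0(\xx)+\varepsilon\vv_1(\xx,\xx/\varepsilon)$ with $(\vv_0,\vv_1)$ ranging over smooth elements of the limiting constraint space (so $\DD(\vv_0)+\DD_{\yy}(\vv_1)=0$ in $\Omega\times Y_s$, $\Div\vv_0=0$, $\Div_{\yy}\vv_1=0$), corrected by an $O(\varepsilon)$-small field supported in an $O(\varepsilon)$-neighbourhood of $\Gamma^{\varepsilon}$ so as to restore exact membership in $\calU^{\varepsilon}$; the correction is negligible because the offending residual strain in $\Omega_s^{\varepsilon}$ is $O(\varepsilon)$ and the measure of the boundary layer is $O(\varepsilon)$. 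Inserting $\vv^{\varepsilon}$ in the variational formulation, using strong two-scale convergence for the $\bT(\varphi^{\varepsilon}):\DD(\vv^{\varepsilon})$ term and plain two-scale convergence for the remaining linear terms, and invoking the balance constraints \eqref{eq:p413} to dispose of the contributions on $\Omega_s^{\varepsilon}$, one reaches the two-scale variational identity equivalent to \eqref{eq:p476}–\eqref{bdr-cell3} with the constitutive law \eqref{eq:p475}; the microscopic pressure $p^0$ is recovered by De Rham's lemma, applied first in $\yy$ on $Y_f$ and then globally, from the two-scale momentum balance. Uniqueness of \eqref{eq:p444} then follows: the magnetic subsystem is uniquely solvable by Lax–Milgram, and with $\bT^0$ fixed the remaining Stokes-type system is uniquely solvable (up to the pressure constant) by Lax–Milgram on the two-scale velocity space, coercivity coming from a two-scale Korn inequality; uniqueness upgrades the subsequential convergence to convergence of the whole family.

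\textbf{The main obstacle.} I expect the principal difficulty to be the construction of oscillating test functions that genuinely respect the microscopic rigidity constraint $\DD(\vv^{\varepsilon})=0$ in $\Omega_s^{\varepsilon}$, together with the error control near the particle interfaces — this is the very mechanism by which the macroscopic strain is transmitted into the solid cells in the limit and is what distinguishes the non-dilute regime from the dilute one. A close second is the justification of strong two-scale convergence of $\nabla\varphi^{\varepsilon}$, which is unavoidable since $\bT(\varphi^{\varepsilon})$ is quadratic, and which is precisely why the improved integrability $s>4$ of \cref{sec:coupled-equation-6} is assumed (cf. \cref{sec:homogenization-2}).
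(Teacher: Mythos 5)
Your treatment of the magnetic subsystem is essentially the paper's: the paper also passes from convergence of the magnetic energies to the strong $L^2$ corrector result for $\nabla\varphi^{\varepsilon}$ (\cref{sec:magn-equat-3}), and then uses the $L^s$ bound with $s>4$ from \cref{sec:coupled-equation-6} to upgrade $\bT(\varphi^{\varepsilon})-\bT^0(\cdot,\cdot/\varepsilon)\to 0$ from $L^1$ to $L^2$ and pass to the limit in the quadratic Maxwell stress. That part of your outline is sound and matches the paper.

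For the fluid part you take a genuinely different route, and it is there that your outline has a real gap. You propose to test the constrained variational problem with oscillating fields $\vv_0(\xx)+\varepsilon\vv_1(\xx,\xx/\varepsilon)$ ``corrected by an $O(\varepsilon)$-small field supported in an $O(\varepsilon)$-neighbourhood of $\Gamma^{\varepsilon}$'' so as to land exactly in $\calU^{\varepsilon}$. But membership in $\calU^{\varepsilon}$ requires $\DD(\vv)=0$ throughout each solid inclusion $Y^{\varepsilon}_{i,s}$, not merely near its boundary: the uncorrected field has residual strain $\DD(\vv_0)+\varepsilon\DD_{\xx}(\vv_1)$ of size $O(1)+O(\varepsilon)$ distributed over the \emph{whole} of $\Omega_s^{\varepsilon}$ (only the combination $\DD(\vv_0)+\DD_{\yy}(\vv_1)$ vanishes there, pointwise in the two-scale variables, not the trace of the constraint at scale $\varepsilon$), and the correction must simultaneously preserve continuity across $\Gamma^{\varepsilon}$ and the divergence-free condition in the fluid. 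This is a nontrivial Tartar/Allaire-type restriction-operator construction, not a boundary-layer estimate, and as stated the error control does not follow. The paper is structured precisely to avoid this: it first produces the pressure $p^{\varepsilon}$ as a Lagrange multiplier on $\calV^{\varepsilon}$ (\cref{sec:coupled-equation-4}), obtains an $\varepsilon$-uniform $L^2_0$ bound on $p^{\varepsilon}$ extended by zero into the solid (\cref{sec:coupled-equation-5}), and then tests the \emph{unconstrained} distributional identity \eqref{eq:p368} with arbitrary oscillating test functions $\vec{\psi}(\xx,\xx/\varepsilon)$ and $\vec{\phi}^0+\varepsilon\vec{\phi}^1$; the microscopic equations \eqref{eq:p477}--\eqref{bdr-cell3} then come out of \eqref{eq:p471} by choosing test functions supported in $Y_f$ or rigid in $Y_s$, and uniqueness of $p^0$ is obtained by a second Lagrange-multiplier argument in the two-scale spaces $\calK$, $\calL$ rather than by De Rham. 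If you want to keep your constrained-test-function route, you must either supply the restriction operator explicitly or relax the constraint and reintroduce the pressure as the paper does; note also that your $\varepsilon$-uniform pressure bound via a Bogovskii operator on $\Omega_f^{\varepsilon}$ is itself the same uniformity question in disguise, so the two gaps are really one.
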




We adopt the following lemma several times throughout the paper. 
\begin{lemma}[Averaging Lemma \cite{cioranescuIntroductionHomogenization1999}]
\label{ave-lem}
Let $\phi \in L^p(\Omega; C_{\per}(Y))$ with $1 \le p < \infty$. Then $\phi (\cdot, \cdot/\varepsilon) \in L^p(\Omega)$ with
$ \norm{\phi \left( \cdot, \frac{\cdot}{\varepsilon}\right)}_{L^p(\Omega)} \le \norm{\phi (\cdot, \cdot)}_{L^p(\Omega; C_{\per}(Y)},
$
and: 
\begin{align*}
    \phi \left( \cdot, \frac{\cdot}{\varepsilon} \right) \rightharpoonup \frac{1}{\abs{Y}} \int_{Y} \phi(\cdot,\yy)\di \yy \text{ weakly in } L^p(\Omega).
\end{align*}
\end{lemma}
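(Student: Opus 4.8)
The plan is to first reduce the statement to a density argument. The two assertions to prove are (i) the pointwise-a.e. meaningfulness and $L^p$-bound for the ``oscillating'' function $\phi(\cdot,\cdot/\varepsilon)$, and (ii) the weak convergence in $L^p(\Omega)$ to the $Y$-average of $\phi$. For (i), I would note that for $\phi \in L^p(\Omega; C_{\per}(Y))$ the map $\xx \mapsto \phi(\xx, \xx/\varepsilon)$ is measurable (it is a composition of the Carath\'{e}odory map $(\xx,\yy)\mapsto \phi(\xx,\yy)$, continuous in $\yy$ and measurable in $\xx$, with the measurable map $\xx \mapsto (\xx,\xx/\varepsilon)$), and pointwise $|\phi(\xx,\xx/\varepsilon)| \le \sup_{\yy \in Y}|\phi(\xx,\yy)| = \norm{\phi(\xx,\cdot)}_{C_{\per}(Y)}$; integrating the $p$-th power over $\Omega$ gives exactly the claimed bound $\norm{\phi(\cdot,\cdot/\varepsilon)}_{L^p(\Omega)} \le \norm{\phi}_{L^p(\Omega; C_{\per}(Y))}$, which in particular shows $\phi(\cdot,\cdot/\varepsilon) \in L^p(\Omega)$.

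For (ii), I would first establish the convergence for the dense subclass of finite sums $\phi(\xx,\yy) = \sum_{k} a_k(\xx) b_k(\yy)$ with $a_k \in C_c(\Omega)$ (or $\mathcal{D}(\Omega)$) and $b_k \in C_{\per}(Y)$. By linearity it suffices to treat a single product $a(\xx)b(\yy)$. Here the classical Riemann--Lebesgue-type lemma for periodic functions applies: $b(\cdot/\varepsilon) \rightharpoonup \frac{1}{|Y|}\int_Y b(\yy)\,\di\yy$ weakly in $L^q(\Omega)$ for every $q$ (and weakly-$*$ in $L^\infty$), which one proves by testing against $L^{q'}$ functions, using that $b(\cdot/\varepsilon)$ is bounded in $L^\infty$ and the convergence holds on the dense set of smooth compactly supported test functions via a tiling-of-cells estimate. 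Multiplying by the fixed continuous function $a$ preserves this, so $a(\xx)b(\xx/\varepsilon) \rightharpoonup a(\xx)\cdot\frac{1}{|Y|}\int_Y b\,\di\yy$ weakly in $L^p(\Omega)$. Summing over $k$ gives the result for the dense subclass.

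Finally I would pass to general $\phi \in L^p(\Omega; C_{\per}(Y))$ by a standard $3\varepsilon$-argument: given a test function $g \in L^{p'}(\Omega)$ and $\theta > 0$, choose a finite-sum approximant $\phi_\theta$ with $\norm{\phi - \phi_\theta}_{L^p(\Omega; C_{\per}(Y))} < \theta$; then the uniform bound from part (i) controls $\big|\int_\Omega (\phi - \phi_\theta)(\xx,\xx/\varepsilon)\,g(\xx)\,\di\xx\big| \le \theta \norm{g}_{L^{p'}(\Omega)}$ uniformly in $\varepsilon$, the average operator is likewise $\theta$-close in the relevant norm (since $\big|\frac{1}{|Y|}\int_Y(\phi-\phi_\theta)(\xx,\yy)\,\di\yy\big| \le \norm{(\phi-\phi_\theta)(\xx,\cdot)}_{C_{\per}(Y)}$), and the dense-subclass convergence handles $\phi_\theta$ itself. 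Letting $\varepsilon \to 0$ and then $\theta \to 0$ yields the claim. The main obstacle, and the only place requiring genuine care, is the density of finite sums $a(\xx)b(\yy)$ in $L^p(\Omega; C_{\per}(Y))$ together with the measurability of $\phi(\cdot,\cdot/\varepsilon)$; both are standard (the density follows from separability of $C_{\per}(Y)$ via a Stone--Weierstrass/partition argument, or is quoted directly from \cite{cioranescuIntroductionHomogenization1999}), so in the write-up I would cite it rather than reprove it.
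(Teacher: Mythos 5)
The paper does not prove this lemma; it is quoted directly from \cite{cioranescuIntroductionHomogenization1999}, so there is no internal proof to compare against. Your sketch (measurability and the pointwise bound $\abs{\phi(\xx,\xx/\varepsilon)} \le \norm{\phi(\xx,\cdot)}_{C_{\per}(Y)}$ for part (i); density of finite sums $\sum_k a_k(\xx)b_k(\yy)$, the Riemann--Lebesgue lemma for periodic oscillations, and a uniform-bound approximation argument for part (ii)) is precisely the standard textbook proof and is correct.
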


\subsection{The Magnetostatic Equation}
\label{sec:magn-equat-4}

Multiplying \eqref{eq:p421} by $\tau \in H^1(\Omega)$ and integrating by parts yields: 
\begin{align}
\label{eq:p427}
\int_{\Omega} \left[ \mu \left( \frac{\xx}{\varepsilon} \right) \nabla
  \varphi^{\varepsilon}\right] \cdot \nabla \tau \di \xx
  &= {\int_{\partial \Omega} \left(\kk \cdot \nn_{\partial\Omega}\right) \tau \di \us},
\end{align}
{where the boundary integral on $\partial \Omega$
  vanishes by \eqref{eq:p423}, and the boundary integrals on
  the particle interfaces $\Gamma_i^\epsilon$ vanish by the continuity of the flux
  $\mu\nabla \varphi^\varepsilon \cdot \nn_i$ across $\Gamma^\varepsilon_i$, for $i\in I^{\varepsilon}$}.
 
For $\phi \in H^1(\Omega)/\RR$, we have that $\norm{\phi}_{H^1(\Omega)/\RR} \coloneqq \norm{\nabla \phi}_{L^2(\Omega)}$ is a norm of $H^1(\Omega)/\RR$, by the Poincar\'{e}-Wirtinger inequality.  
Using $H^1({\Omega})^d \subset H (\Div,\Omega)$ and the fact that the field $\kk$ is divergence free, by the Lax--Milgram Theorem, we have that, for each $\varepsilon > 0$,
problem \eqref{eq:p427} admits a unique solution $\varphi^{\varepsilon}$ in $H^1(\Omega)/\RR$
satisfying:
\begin{align}
\label{eq:p429}
  \norm{\varphi^{\varepsilon}}_{H^1(\Omega)/\RR}
  \le C \left(\norm{\kk}_{L^2(\Omega)^d} + \norm{\Div\kk}_{L^2(\Omega)}\right) = C \norm{\kk}_{L^2(\Omega)^d},
\end{align}
where $C = C(\mu)$ is a constant depending only on $\mu$.

\begin{lemma}
\label{sec:magn-equat-2}
Suppose $\varphi^{\varepsilon} \in H^1(\Omega)/\RR$ satisfies \eqref{eq:p421} and
\eqref{eq:p423}.
Then, there exist
$\varphi ^0 \in H^1(\Omega)/\RR$ and $\varphi^1 \in
L^2(\Omega,H_{\per}^1(Y)/\RR)$, such that $\varphi^{\varepsilon}
\tscale \varphi^0,$ $\nabla \varphi^{\varepsilon} \tscale \nabla
\varphi^0 + \nabla_{\yy} \varphi^1$, with: 
\begin{subequations}
  \label{eq:p432} 
\begin{align}
\label{eq:p412}
  -\Div \left[ \frac{1}{\abs{Y}}\int_Y \mu(\yy) \left( \nabla \varphi^0 (\xx) +
  \nabla_{\yy} \varphi^1(\xx,\yy) \right) \di \yy \right]
  &= {0},
  &&\text{ in } \Omega,\\
\label{eq:p433}
  -\Div_{\yy} \left[ \mu (\yy) (\nabla \varphi^0(\xx) + \nabla_{\yy} \varphi^1(\xx,\yy)) \right]
  &= 0 &&\text{ in } \Omega\times Y,\\
  \label{eq:p432c}
  \left[ \frac{1}{\abs{Y}}\int_Y \mu (\yy) \left( \nabla \varphi^0 +
    \nabla_{\yy}\varphi^1 \right)\di \yy \right]\cdot \nn_{\partial \Omega}
    &=\kk \cdot \nn_{\partial\Omega}, &&\text{ on }\partial \Omega.
\end{align}
\end{subequations}

\end{lemma}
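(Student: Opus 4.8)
The plan is to run the classical oscillating test function argument of two-scale convergence, in the spirit of \cite{allaireHomogenizationTwoscaleConvergence1992,nguetsengGeneralConvergenceResult1989}. By the uniform a priori bound \eqref{eq:p429}, the family $\{\varphi^{\varepsilon}\}$ is bounded in $H^1(\Omega)/\RR$; hence the two-scale compactness theorem for $H^1$-bounded sequences yields a subsequence (not relabeled), a limit $\varphi^0\in H^1(\Omega)/\RR$, and a corrector $\varphi^1\in L^2(\Omega,H^1_{\per}(Y)/\RR)$ with $\varphi^{\varepsilon}\tscale\varphi^0$ and $\nabla\varphi^{\varepsilon}\tscale\nabla\varphi^0+\nabla_{\yy}\varphi^1$. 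What remains is to identify the system \eqref{eq:p432} solved by $(\varphi^0,\varphi^1)$ and to prove its uniqueness, the latter promoting the convergence to the whole family.

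For the identification, I would substitute into the weak formulation \eqref{eq:p427} the \emph{oscillating test function} $\tau^{\varepsilon}(\xx)=\phi_0(\xx)+\varepsilon\phi_1(\xx,\xx/\varepsilon)$ with $\phi_0\in C^{\infty}(\bar\Omega)$ and $\phi_1\in\mathcal{D}(\Omega,C^{\infty}_{\per}(Y))$, so that $\nabla\tau^{\varepsilon}=\nabla\phi_0(\xx)+(\nabla_{\yy}\phi_1)(\xx,\xx/\varepsilon)+\varepsilon(\nabla_{\xx}\phi_1)(\xx,\xx/\varepsilon)$. The $O(\varepsilon)$ contribution is bounded by $\varepsilon\,\Lambda\,\norm{\nabla\varphi^{\varepsilon}}_{L^2(\Omega)}\,\norm{\nabla_{\xx}\phi_1}_{L^{\infty}}\to 0$. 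In the leading term one pairs $\nabla\varphi^{\varepsilon}$ against $\Psi(\xx,\xx/\varepsilon)$, where $\Psi(\xx,\yy)\coloneqq\mu(\yy)\bigl(\nabla\phi_0(\xx)+\nabla_{\yy}\phi_1(\xx,\yy)\bigr)$; since $\mu\in L^{\infty}_{\per}(Y)$ and $\phi_0,\phi_1$ are smooth, $\Psi$ lies in the admissible class $L^2_{\per}(Y,C(\bar\Omega))$, so the definition of two-scale convergence applies to it. Because $\phi_1$ is compactly supported in $\Omega$, the right-hand side of \eqref{eq:p427} equals $\int_{\partial\Omega}(\kk\cdot\nn_{\partial\Omega})\phi_0\,\di\us$ for every $\varepsilon$. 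Passing to the limit yields
\[
\frac{1}{\abs{Y}}\int_{\Omega}\int_{Y}\mu(\yy)\bigl(\nabla\varphi^0+\nabla_{\yy}\varphi^1\bigr)\cdot\bigl(\nabla\phi_0+\nabla_{\yy}\phi_1\bigr)\,\di\yy\,\di\xx=\int_{\partial\Omega}(\kk\cdot\nn_{\partial\Omega})\phi_0\,\di\us,
\]
which extends by density to all $\phi_0\in H^1(\Omega)/\RR$ and $\phi_1\in L^2(\Omega,H^1_{\per}(Y)/\RR)$. Taking $\phi_0=0$ and integrating by parts in $\yy$ produces the cell equation \eqref{eq:p433}; taking $\phi_1=0$ and integrating by parts in $\xx$ (first with $\phi_0\in\mathcal{D}(\Omega)$ to get the interior equation \eqref{eq:p412}, then with general $\phi_0$ to read off the flux condition \eqref{eq:p432c}) completes the identification.

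For uniqueness, observe that \eqref{eq:p432} is the Euler--Lagrange system of the bilinear form $B\bigl((\varphi^0,\varphi^1),(\phi_0,\phi_1)\bigr)\coloneqq\frac{1}{\abs{Y}}\int_{\Omega}\int_{Y}\mu(\yy)(\nabla\varphi^0+\nabla_{\yy}\varphi^1)\cdot(\nabla\phi_0+\nabla_{\yy}\phi_1)\,\di\yy\,\di\xx$ on the Hilbert space $(H^1(\Omega)/\RR)\times L^2(\Omega,H^1_{\per}(Y)/\RR)$. Using the periodicity identity $\int_{Y}\nabla_{\yy}\varphi^1\,\di\yy=0$ to split $\int_{Y}\abs{\nabla\varphi^0+\nabla_{\yy}\varphi^1}^2\,\di\yy=\abs{Y}\abs{\nabla\varphi^0}^2+\int_{Y}\abs{\nabla_{\yy}\varphi^1}^2\,\di\yy$, together with $\mu\ge\Lambda^{-1}$ and the Poincar\'{e}--Wirtinger inequalities on $H^1(\Omega)/\RR$ and $H^1_{\per}(Y)/\RR$, shows that $B$ is coercive; it is plainly bounded, and the load is a bounded linear functional by the trace theorem, so Lax--Milgram yields a unique $(\varphi^0,\varphi^1)$. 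Since every subsequence of $\{\varphi^{\varepsilon}\}$ has a further subsequence converging to this same limit, the entire family two-scale converges, as asserted. I expect the only genuinely delicate point to be the admissibility of $\Psi$ when $\mu$ is merely $L^{\infty}$ --- handled, as noted, by working in $L^2_{\per}(Y,C(\bar\Omega))$ rather than in $L^2(\Omega,C_{\per}(Y))$ --- with everything else being routine.
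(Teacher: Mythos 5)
Your proposal is correct and follows essentially the same route as the paper: two-scale compactness from the a priori bound \eqref{eq:p429}, the oscillating test function $\tau^0+\varepsilon\tau^1(\cdot,\cdot/\varepsilon)$ with $\mu(\yy)\left(\nabla\tau^0+\nabla_{\yy}\tau^1\right)\in L^2_{\per}(Y,C(\bar\Omega))$ as admissible test function, identification of \eqref{eq:p412}--\eqref{eq:p432c} by separately setting $\tau^0$ or $\tau^1$ to zero, and uniqueness via Lax--Milgram on the two-scale variational problem, which upgrades the subsequential convergence to the whole family. The only (cosmetic) difference is that you keep $\phi_0\in C^{\infty}(\bar\Omega)$ from the outset so the boundary term appears directly in the limit identity, whereas the paper first uses compactly supported $\tau^0$ and recovers the flux condition in a separate integration-by-parts step; you also spell out the coercivity cross-term cancellation that the paper leaves implicit.
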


\begin{proof}
\begin{enumerate}[wide]
\item By \eqref{eq:p429}, there exist $\varphi^0 \in H^1(\Omega)/\RR$ and
  $\varphi^1 \in L^2(\Omega, H_{\per}^1(Y)/\RR)$, such that (up to a subsequence)
  $ 
  \varphi^{\varepsilon}
  \tscale \varphi^0$
  and
  $
  \nabla \varphi^{\varepsilon}
  \tscale \nabla \varphi^0 + \nabla_{\yy} \varphi^1 $ (see \cite{allaireHomogenizationTwoscaleConvergence1992}).
In \eqref{eq:p427}, let $\tau(\cdot)=\tau^0 (\cdot) +
  \varepsilon\tau^1 \left( \cdot,\frac{\cdot}{\varepsilon}
  \right)$, with $\tau^0 \in \mathcal{D}(\Omega)$ and $\tau^1
  \in \mathcal{D} (\Omega, C^{\infty}_{\per}(Y))$, we obtain:
\begin{align*}
  \begin{split}
 \int_{\Omega} \mu \left( \frac{\xx}{\varepsilon} \right) \nabla
  \varphi^{\varepsilon}(\xx) \cdot\left[ \nabla \tau^0(\xx)+ \varepsilon \nabla_{\xx}
  \tau^1 \left( \xx, \frac{\xx}{\varepsilon} \right) +
  \nabla_{\yy}\tau^1 \left( \xx, \frac{\xx}{\varepsilon} \right) \right]
  \di \xx =0.
  \end{split}
\end{align*}
Now, using
$\mu \left( \yy \right) \left( \nabla \tau^0(\xx)
  + \nabla_{\yy}\tau^1(\xx,\yy) \right) \in L_{\per}^2(Y, C(\bar{\Omega}))$
as a test function in the two-scale convergence of
$\nabla \varphi^{\varepsilon}$, as $\varepsilon \to 0$, we obtain:
\begin{align}
\label{eq:pp324}
  \begin{split}
\frac{1}{\abs{Y}}\int_{\Omega} \int_Y \mu(\yy) \left( \nabla \varphi^0(\xx) + \nabla_{\yy} \varphi^1(\xx,\yy)
  \right)\cdot\left( \nabla \tau^0(\xx) + \nabla_{\yy} \tau^1(\xx,\yy) \right)
  \di \yy \di \xx =0.
  \end{split}
\end{align}
In \eqref{eq:pp324}, choosing $\tau^0(\xx) \equiv 0$, $\tau^1(\xx,\yy) \equiv \tau^1(\yy)$, and integrating by parts, we obtain \eqref{eq:p433}. Similarly, choosing $\tau^1(\xx,\yy) \equiv 0$ leads to \eqref{eq:p412}.

\item Letting $\varepsilon \to 0$ in \eqref{eq:p427}, integrating by parts, and using \eqref{eq:p412}, we obtain:
\begin{align*}
    \int_{\partial \Omega} &\left(\kk(\xx) \cdot \nn_{\partial\Omega}\right) \tau(\xx) \di \us
    = \frac{1}{\abs{Y}}\int_{\Omega} \int_Y \mu(\yy) \left( \nabla \varphi^0(\xx) + \nabla_{\yy} \varphi^1(\xx,\yy)
  \right)\cdot \nabla \tau(\xx)
  \di \yy \di \xx\\
  &=
  \frac{1}{\abs{Y}}\int_{\partial\Omega} \tau (\xx) \left[ 
  \int_Y \mu(\yy) \left( \nabla \varphi^0(\xx) + \nabla_{\yy} \varphi^1(\xx,\yy)
  \right)  \di \yy \right]\cdot\nn_{\partial\Omega} \di \us,
\end{align*}
which implies \eqref{eq:p432c}.
\item 
 For $ (\tau^0, \tau^1)  \in  (H^1(\Omega)/\RR) \times L^2(\Omega,
 H^1_{\per}(Y)/\RR)$, let
$ 
  \norm{(\tau^0,\tau^1)}^2
  \coloneqq \norm{\nabla\tau^0}^2_{L^2(\Omega)}
  + \norm{\tau^1}^2_{L^2(\Omega;
    H_{\per}^1(Y)/\RR)}.
$ 
Using this norm, we apply the Lax-Milgram theorem to the variational problem:
\begin{align}
\label{eq:bp452}
\begin{split}
    \frac{1}{\abs{Y}}\int_{\Omega} \int_Y \mu(\yy) \left( \nabla \varphi^0(\xx) + \nabla_{\yy} \varphi^1(\xx,\yy)
  \right)\cdot\left( \nabla \tau^0(\xx) + \nabla_{\yy} \tau^1(\xx,\yy) \right)
  \di \yy \di \xx\\
  =\int_{\partial \Omega} \left(\kk(\xx) \cdot \nn_{\partial\Omega}\right) \tau^0(\xx)\di \us,
  \end{split}
\end{align}
to obtain that \eqref{eq:p432} has a unique
solution $(\varphi^0, \varphi^1)$ in $(H^1(\Omega)/\RR) \times L^2(\Omega,
 H^1_{\per}(Y)/\RR)$.
\end{enumerate}
\end{proof}

The linearity of \eqref{eq:p433} implies that we can separate the slow
and the fast variable in $\varphi^1(\xx,\yy)$ by:
\begin{align}
\label{eq:p452}
  \varphi^1(\xx,\yy)
  = \omega^i(\yy) \frac{\partial \varphi^0}{\partial x_i}(\xx).
\end{align}
Substituting back in \eqref{eq:p433}, we deduce that $\omega^i \in H^1_{\per}(Y)/\RR,\, 1 \le i
\le d,$ satisfies:
 \begin{align}
\label{eq:p453}
-\Div_{\yy} \left[ \mu(\yy) \left(\ee^i +\nabla_{\yy} \omega^i(\yy)
  \right)  \right]
  &= 0 \text{ in } Y.
\end{align}

Plugging \eqref{eq:p452} into \eqref{eq:p412} and \eqref{eq:p432c}, we obtain:
\begin{align}
\label{eq:p455}
    -\Div \left( \mu^{\eff} \nabla\varphi^0 \right) 
    = {0} \text{ in }\Omega, \quad\text{ and } \quad
    \left( \mu^{\eff} \nabla\varphi^0 \right) \cdot \nn_{\partial\Omega}
    = \kk \cdot \nn_{\partial\Omega} \text{ on }\partial\Omega,
\end{align}
where the effective magnetic permeability is given by: 
\begin{align}
    \label{eq:bp456}
    \mu^\eff_{jk} = \frac{1}{\abs{Y}}\int_{Y} \mu (\yy) \left( \delta_{kj} + \frac{\partial \omega^k}{\partial y_j} \right) \di \yy 
    = \frac{1}{\abs{Y}} \int_Y \mu(\yy) (\ee^k+ \nabla \omega^k(\yy))\cdot \ee^j  \di \yy.
\end{align}

\begin{lemma}
\label{regularity-phi0}
The coefficients of the effective matrix $\mu^{\eff}$ can be written as:
\begin{align}
\label{eq:p456}
  \mu^{\eff}_{jk}
  \coloneqq \frac{1}{\abs{Y}} \int_Y \mu(\yy) (\ee^k+ \nabla \omega^k(\yy))\cdot
  (\ee^j + \nabla \omega^j(\yy)) \di \yy.
\end{align}
Therefore, $\mu^{\eff}$ is symmetric and positive definite. As a consequence, \eqref{eq:p455} has a unique solution (up to a constant) $\varphi^0 \in W^{2,s}(\Omega) \subset C^1(\bar{\Omega})$ where $s > 4$.
\end{lemma}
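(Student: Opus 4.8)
The plan is to establish the three assertions in order: first the symmetric formula \eqref{eq:p456}, then the symmetry and positive definiteness of $\mu^{\eff}$, and finally the regularity of $\varphi^0$ via the elliptic estimate already recorded in \cref{sec:coupled-equation-6}. For the first step I would start from the formula \eqref{eq:bp456} and use the cell problem \eqref{eq:p453} as a variational identity: testing \eqref{eq:p453} with $\omega^j \in H^1_{\per}(Y)/\RR$ and integrating by parts over $Y$ (the periodicity kills the boundary terms) gives $\int_Y \mu(\yy)(\ee^k + \nabla_{\yy}\omega^k)\cdot \nabla_{\yy}\omega^j \di \yy = 0$. Adding $\frac{1}{\abs{Y}}\int_Y \mu(\yy)(\ee^k+\nabla_{\yy}\omega^k)\cdot\ee^j\di\yy$ to both sides turns \eqref{eq:bp456} into \eqref{eq:p456}. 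Since the integrand of \eqref{eq:p456} is manifestly symmetric under $j \leftrightarrow k$, symmetry of $\mu^{\eff}$ is immediate.

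For positive definiteness, take $\xi = (\xi_1,\dots,\xi_d) \in \RR^d$ and compute $\mu^{\eff}_{jk}\xi_j\xi_k = \frac{1}{\abs{Y}}\int_Y \mu(\yy)\,\bigl| \xi + \nabla_{\yy}(\xi_i\omega^i) \bigr|^2 \di \yy$, using bilinearity of \eqref{eq:p456}; here I abbreviate $w_\xi \coloneqq \xi_i \omega^i \in H^1_{\per}(Y)/\RR$. The lower bound $\mu \ge \Lambda^{-1}$ from \cref{sec:formulation} gives $\mu^{\eff}_{jk}\xi_j\xi_k \ge \Lambda^{-1}\abs{Y}^{-1}\int_Y |\xi + \nabla_{\yy} w_\xi|^2 \di\yy$. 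To conclude this is strictly positive for $\xi \neq 0$, I would observe that $\int_Y |\xi + \nabla_{\yy}w_\xi|^2 \di\yy \ge \int_Y |\,\overline{\xi + \nabla_{\yy}w_\xi}\,|^2 = \abs{Y}\,|\xi|^2$, since the mean over the periodic cell of $\nabla_{\yy}w_\xi$ vanishes (the average of a periodic gradient is zero) and $L^2$-norm dominates the norm of the mean by Jensen. Hence $\mu^{\eff}_{jk}\xi_j\xi_k \ge \Lambda^{-1}|\xi|^2$, which is the desired coercivity; boundedness $\mu^{\eff}_{jk}\xi_j\xi_k \le C|\xi|^2$ follows similarly from $\mu \le \Lambda$ together with the a priori bound $\norm{\nabla_{\yy}\omega^i}_{L^2(Y)} \le C$ obtained by testing \eqref{eq:p453} with $\omega^i$ itself.

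Finally, with $\mu^{\eff}$ a constant symmetric positive-definite matrix, the system \eqref{eq:p455} is a uniformly elliptic constant-coefficient Neumann-type problem with data $\kk \cdot \nn_{\partial\Omega}$; existence and uniqueness of $\varphi^0 \in H^1(\Omega)/\RR$ is standard (Lax--Milgram, using the compatibility condition $\int_{\partial\Omega}\kk\cdot\nn_{\partial\Omega}\di\us = 0$ imposed in \cref{sec:formulation}). For the $W^{2,s}$ regularity I would invoke the elliptic regularity theory for the conormal problem on the $C^3$ domain $\Omega$ — this is exactly the setting of \cref{sec:coupled-equation-6} and \cite{byunConormalDerivativeProblem2005} — noting that $\kk \in H^1(\Omega)^d \subset L^s(\Omega)^d$ for $s \le 6$ by Sobolev embedding, so $\kk \cdot \nn_{\partial\Omega}$ lies in the right trace space; then $\varphi^0 \in W^{2,s}(\Omega)$, and the embedding $W^{2,s}(\Omega) \hookrightarrow C^1(\bar\Omega)$ holds for $s > d$, hence in particular for $s > 4 \ge d$. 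I expect the only genuine subtlety to be bookkeeping in the second step — making sure the cell corrector $w_\xi$ is taken in the quotient space $H^1_{\per}(Y)/\RR$ so that the mean-zero-gradient argument is clean — and, to a lesser extent, citing the correct regularity statement for the constant-coefficient limit problem; everything else is routine.
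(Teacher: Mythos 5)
Your treatment of the formula \eqref{eq:p456}, the symmetry, and the positive definiteness is exactly the paper's argument: test the cell problem \eqref{eq:p453} against a corrector to produce the missing cross term, then combine Jensen's inequality with the fact that a periodic gradient has zero mean over $Y$ to get $\mu^{\eff}\vec{\zeta}\cdot\vec{\zeta}\ge \Lambda^{-1}\abs{Y}\,\abs{\vec{\zeta}}^2$. Where you genuinely diverge is the regularity of $\varphi^0$, and there your route has a weak link. You want to apply $W^{2,s}$ elliptic theory directly to the constant-coefficient Neumann problem \eqref{eq:p455}, arguing that $\kk\in H^1(\Omega)^d\subset L^s(\Omega)^d$ puts $\kk\cdot\nn_{\partial\Omega}$ ``in the right trace space.'' That inference does not work: bulk $L^s$ integrability of $\kk$ gives no control whatsoever on the boundary trace ($L^s$ functions have no trace), and $W^{2,s}$ regularity for the Neumann problem requires $\kk\cdot\nn_{\partial\Omega}\in W^{1-1/s,s}(\partial\Omega)$, whereas $\kk\in H^1(\Omega)^d$ only yields $\kk\cdot\nn_{\partial\Omega}\in H^{1/2}(\partial\Omega)$ --- far short of $W^{1-1/s,s}$ for $s$ close to $6$ when $d=3$.

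The paper takes a different and more robust path to the integrability of $\nabla\varphi^0$: it first gets $\varphi^0\in H^2(\Omega)$ from standard Neumann regularity with $H^{1/2}$ data, and then, instead of re-running boundary regularity on the limit problem, it passes the \emph{uniform-in-$\varepsilon$} Meyers-type estimate \eqref{eq:493} of \cref{sec:coupled-equation-6} through the two-scale (hence weak $L^s$) convergence $\nabla\varphi^{\varepsilon}\rightharpoonup\nabla\varphi^0$ to conclude $\nabla\varphi^0\in L^s(\Omega)^d$. That estimate only requires $\kk\in L^s(\Omega)^d$ and involves no trace of $\kk\cdot\nn$, which is precisely what sidesteps the obstruction in your argument. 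If you prefer to argue directly on the limit problem, the correct move is to apply the same conormal-derivative gradient estimate (\cite{byunConormalDerivativeProblem2005}, valid a fortiori for the constant matrix $\mu^{\eff}$) to \eqref{eq:p455}, which again yields $\nabla\varphi^0\in L^s(\Omega)^d$; either way one lands on the $W^{1,s}$ level for the gradient, and the final upgrade to $\varphi^0\in C^1(\bar{\Omega})$ is the part that genuinely relies on the restriction $s>4>d$.
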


\begin{proof}
\begin{enumerate}[wide]
\item Testing \eqref{eq:p453} against $\omega^k \in H^1_{\per}/\RR$, one gets
$ 
    \int_Y \mu (\yy) (\ee^j + \nabla_{\yy} \omega^j) \cdot \nabla \omega^k \di \yy = 0,  
$ 
where the boundary term vanishes due to periodicity.
{From the above identity and \eqref{eq:bp456},} we obtain \eqref{eq:p456}.
Now, for $\vec{\zeta} \in \RR^d$,  we have: 
\begin{align*}
    \mu^{\eff} \vec{\zeta} \cdot \vec{\zeta} 
    = \mu^{\eff}_{jk} \zeta_j \zeta_k
    &= \frac{1}{\abs{Y}} \int_Y \mu(\yy) (\ee^k+ \nabla \omega^k(\yy))\zeta_k\cdot (\ee^j + \nabla \omega^j(\yy))\zeta_j \di \yy\\
  &\geq\frac{1}{\Lambda\abs{Y}}\abs{ \zeta_k\int_Y (\ee^k+ \nabla \omega^k(\yy))\di \yy}^2
    =\frac{\abs{Y}}{\Lambda} \abs{\vec{\zeta}}^2,
\end{align*}
because of Jensen's inequality and $\int_Y (\ee^k+ \nabla \omega^k(\yy))\di \yy = \abs{Y} \delta_{kj} \ee^j$.

\item Since $\Omega$ is of class $C^3$ and $\kk \cdot \nn \in H^{1/2}(\partial \Omega)$ because $\kk \in H^1(\Omega)^d$, we have that \eqref{eq:p455} admits a solution $\varphi^0 \in H^2(\Omega)$ by \cite[Theorem 5.50]{demengelFunctionalSpacesTheory2012}. 
On the other hand, 
$\nabla \varphi^\varepsilon  \tscale \nabla \varphi^0 + \nabla_{\yy} \varphi^1$ implies $\nabla \varphi^\varepsilon \rightharpoonup \frac{1}{\abs{Y}} \int_Y (\nabla \varphi^0 + \nabla_{\yy} \varphi^1) \di \yy = \nabla\varphi^0$ in $L^2(\Omega)$
(here we use \eqref{eq:p452}). The latter, together with \eqref{eq:493}, implies that $\nabla \varphi^\varepsilon$ converges weakly in $L^s(\Omega)^d$ to $\nabla\varphi^0$. Moreover, since $s > 4$, we have $\varphi^0 \in W^{2,s}(\Omega) \subset C^1(\bar{\Omega})$.
\end{enumerate}
\end{proof}

\begin{lemma}[First order corrector result for the magnetic potential]
  \label{sec:magn-equat-3}
Let $\varphi^{\varepsilon},\, \varphi^0$ and $\varphi^1$ be as in
\cref{sec:magn-equat-2}, then: 
\begin{align}
\label{eq:p463}
\lim_{\varepsilon\to 0}\norm{\nabla \varphi^{\varepsilon} (\cdot) - \nabla \varphi^0(\cdot) -
  \nabla_{\yy} \varphi^1 \left( \cdot, \frac{\cdot}{\varepsilon}
  \right)}_{L^2(\Omega)} =0.
\end{align}
\end{lemma}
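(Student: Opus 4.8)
The plan is to run the classical energy (oscillating--test--function) corrector argument, following the scheme of \cite{allaireHomogenizationTwoscaleConvergence1992}. Let $\vec{r}_{\varepsilon} \coloneqq \nabla\varphi^{\varepsilon} - \nabla\varphi^0 - \nabla_{\yy}\varphi^1(\cdot,\cdot/\varepsilon)$. By the uniform ellipticity assumed on $\mu$, we have $\Lambda^{-1}\abs{\vec{r}_{\varepsilon}}^2 \le \mu(\xx/\varepsilon)\,\vec{r}_{\varepsilon}\cdot\vec{r}_{\varepsilon}$ pointwise, so \eqref{eq:p463} will follow once we show that the \emph{energy}
\begin{equation*}
  E_{\varepsilon} \coloneqq \int_{\Omega}\mu(\xx/\varepsilon)\,\vec{r}_{\varepsilon}\cdot\vec{r}_{\varepsilon}\di\xx = A_{\varepsilon} - 2B_{\varepsilon} + C_{\varepsilon}
\end{equation*}
tends to $0$, where $A_{\varepsilon} \coloneqq \int_{\Omega}\mu(\xx/\varepsilon)\abs{\nabla\varphi^{\varepsilon}}^2\di\xx$, $B_{\varepsilon} \coloneqq \int_{\Omega}\mu(\xx/\varepsilon)\nabla\varphi^{\varepsilon}\cdot[\nabla\varphi^0+\nabla_{\yy}\varphi^1(\cdot,\cdot/\varepsilon)]\di\xx$, and $C_{\varepsilon} \coloneqq \int_{\Omega}\mu(\xx/\varepsilon)\abs{\nabla\varphi^0+\nabla_{\yy}\varphi^1(\cdot,\cdot/\varepsilon)}^2\di\xx$. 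I will prove that $A_{\varepsilon}$, $B_{\varepsilon}$ and $C_{\varepsilon}$ all converge to the same number
\begin{equation*}
  E_0 \coloneqq \frac{1}{\abs{Y}}\int_{\Omega}\int_Y \mu(\yy)\abs{\nabla\varphi^0(\xx)+\nabla_{\yy}\varphi^1(\xx,\yy)}^2\di\yy\di\xx,
\end{equation*}
so that $E_{\varepsilon}\to E_0-2E_0+E_0=0$.

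For $A_{\varepsilon}$: choosing $\tau=\varphi^{\varepsilon}$ in the weak formulation \eqref{eq:p427} gives $A_{\varepsilon}=\int_{\partial\Omega}(\kk\cdot\nn_{\partial\Omega})\varphi^{\varepsilon}\di\us$ (this is well defined on the quotient $H^1(\Omega)/\RR$ because $\int_{\partial\Omega}\kk\cdot\nn_{\partial\Omega}\di\us=0$). From \eqref{eq:p429} and \cref{sec:magn-equat-2} we have $\varphi^{\varepsilon}\wcon\varphi^0$ in $H^1(\Omega)/\RR$, and since the trace operator $H^1(\Omega)\to L^2(\partial\Omega)$ is compact, $\varphi^{\varepsilon}|_{\partial\Omega}\to\varphi^0|_{\partial\Omega}$ in $L^2(\partial\Omega)$; hence $A_{\varepsilon}\to\int_{\partial\Omega}(\kk\cdot\nn_{\partial\Omega})\varphi^0\di\us$, and inserting $(\tau^0,\tau^1)=(\varphi^0,\varphi^1)$ into the limit variational identity \eqref{eq:bp452} identifies this boundary integral with $E_0$. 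For $B_{\varepsilon}$: using the representation \eqref{eq:p452} and $\varphi^0\in C^1(\bar{\Omega})$ from \cref{regularity-phi0}, the field $\mu(\yy)\bigl(\nabla\varphi^0(\xx)+\nabla_{\yy}\omega^i(\yy)\,\partial_{x_i}\varphi^0(\xx)\bigr)$ lies in $L^2_{\per}(Y,C(\bar{\Omega}))$ (only measurability in $\yy$ is needed there), so it is an admissible test field against the two-scale limit $\nabla\varphi^{\varepsilon}\tscale\nabla\varphi^0+\nabla_{\yy}\varphi^1$, giving $B_{\varepsilon}\to E_0$. For $C_{\varepsilon}$: again by \eqref{eq:p452}, $\varphi^0\in C^1(\bar{\Omega})$, and the Averaging Lemma \cref{ave-lem} (in its standard extension to $\yy$-factors that are merely $L^2_{\per}(Y)$, obtained by density of $C_{\per}(Y)$), each term in the expansion of $C_{\varepsilon}$ converges to its $\yy$-average, whence $C_{\varepsilon}\to E_0$.

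The one point needing care is that $\varphi^1$, equivalently each $\omega^i$, lies only in $H^1_{\per}(Y)$, so $\nabla_{\yy}\varphi^1(\cdot,\cdot/\varepsilon)$ is not literally of the form $\psi(\xx,\xx/\varepsilon)$ with $\psi$ continuous in $\yy$, and \cref{ave-lem} as stated does not apply verbatim to $C_{\varepsilon}$. The remedy is a density step: pick $\omega^i_{\delta}\in C^{\infty}_{\per}(Y)$ with $\omega^i_{\delta}\to\omega^i$ in $H^1_{\per}(Y)$, set $\varphi^1_{\delta}(\xx,\yy)\coloneqq\omega^i_{\delta}(\yy)\,\partial_{x_i}\varphi^0(\xx)$, and run the three limits above with $\varphi^1_{\delta}$ in place of $\varphi^1$, where now every integrand is genuinely covered by \cref{ave-lem} or by the two-scale test class. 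The resulting error terms are bounded \emph{uniformly in $\varepsilon$} by $C\norm{\omega^i-\omega^i_{\delta}}_{H^1_{\per}(Y)}$, using $\abs{\mu}\le\Lambda$, the Cauchy--Schwarz inequality, the bound \eqref{eq:p429} on $\norm{\nabla\varphi^{\varepsilon}}_{L^2(\Omega)}$, $\norm{\varphi^0}_{C^1(\bar{\Omega})}<\infty$, and \cref{ave-lem} applied to $\nabla_{\yy}\omega^i(\cdot/\varepsilon)$. Letting $\varepsilon\to0$ first and then $\delta\to0$ yields $E_{\varepsilon}\to0$. I expect this approximation bookkeeping to be the only genuine obstacle; everything else is a direct consequence of the cell equations derived in \cref{sec:magn-equat-2} and \cref{regularity-phi0}. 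Finally, since the two-scale limit $(\varphi^0,\varphi^1)$ is unique, the convergence \eqref{eq:p463} holds for the whole family $\{\varphi^{\varepsilon}\}$, not merely a subsequence.
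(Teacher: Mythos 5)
Your proof is correct and follows essentially the same route as the paper: ellipticity reduces the claim to showing that the energy $A_\varepsilon - 2B_\varepsilon + C_\varepsilon \to 0$, with $A_\varepsilon$ handled through the weak formulation and \eqref{eq:bp452} evaluated at $(\varphi^0,\varphi^1)$, $B_\varepsilon$ through the separated form \eqref{eq:p452} and an $L^2_{\per}(Y,C(\bar{\Omega}))$ test field, and $C_\varepsilon$ through averaging, exactly as in the paper's $\mathcal{I}_1,\mathcal{I}_2,\mathcal{I}_3$. The only differences are cosmetic: the paper passes to the limit in $\mathcal{I}_1$ by rewriting the boundary term as $\int_\Omega \kk\cdot\nabla\varphi^\varepsilon\di\xx$ and two-scaling rather than by trace compactness, and your explicit smoothing of $\omega^i$ in $C_\varepsilon$ supplies a density step that the paper's direct appeal to \cref{ave-lem} leaves implicit (since $\nabla_{\yy}\omega^i$ is only $L^2_{\per}(Y)$, not continuous in $\yy$).
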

\begin{proof}
\begin{enumerate}[wide]
\item 
Recall $\mu \ge \Lambda^{-1} > 0$, so:
\begin{align}
\label{eq:p457}
  \begin{split}
   &\Lambda^{-1}
    \norm{\nabla \varphi^{\varepsilon} (\cdot) - \nabla
      \varphi^0(\cdot) - \nabla_{\yy} \varphi^1 \left( \cdot,
        \frac{\cdot}{\varepsilon} \right)}^2_{L^2(\Omega)}\\
  &\le \int_{\Omega} \mu \left( \frac{\xx}{\varepsilon} \right) \nabla
  \varphi^{\varepsilon}(\xx) \cdot \nabla \varphi^{\varepsilon}(\xx)
  \di \xx \\
  &\qquad - 2 \int_{\Omega} \mu \left( \frac{\xx}{\varepsilon} \right)
  \left( \nabla \varphi^0(\xx) +  \nabla_{\yy} \varphi^1 \left( \xx,
      \frac{\xx}{\varepsilon} \right) \right) \cdot \nabla
  \varphi^{\varepsilon} (\xx) \di \xx\\
  &\qquad + \int_{\Omega} \mu \left( \frac{\xx}{\varepsilon} \right)
  \left( \nabla \varphi^0(\xx) + \nabla_{\yy}\varphi^1 \left( \xx,
      \frac{\xx}{\varepsilon} \right) \right) \cdot \left( \nabla
    \varphi^0 (\xx) + \nabla_{\yy} \varphi^1 \left( \xx,
      \frac{\xx}{\varepsilon} \right) \right) \di \xx\\
  &\eqqcolon
  \mathcal{I}_1 + \mathcal{I}_2 + \mathcal{I}_3.
  \end{split}
\end{align}

\item 
From \eqref{eq:p427} and by taking $\left( \tau^0, \tau^1 \right) = \left( \varphi^0,\varphi^1 \right)$ in \eqref{eq:bp452}, we obtain:
\begin{align}
\label{eq:p458}
    \lim_{\varepsilon \to 0} \mathcal{I}_1
    &= \lim_{\varepsilon \to 0} \int_{\partial\Omega} (\kk \cdot \nn_{\partial\Omega})
    \varphi^{\varepsilon} \di \us 
    = \lim_{\varepsilon \to 0} \int_{\Omega} \Div(\varphi^{\varepsilon} \kk) \di \xx
    = \lim_{\varepsilon \to 0} \int_{\Omega} \kk \cdot \nabla \varphi^\varepsilon \di \xx\\ \nonumber
    &= \frac{1}{\abs{Y}} \int_{\Omega} \int_{Y} \kk \cdot (\nabla\varphi^0 + \nabla_{\yy} \varphi^1) \di \yy \di \xx
    = \int_{\Omega} \kk \cdot \nabla \varphi^0 \di \xx
    = \int_{\partial\Omega} (\kk \cdot \nn_{\partial\Omega})
    \varphi^{0} \di \us\\ \nonumber
    &= \frac{1}{\abs{Y}} \int_{\Omega} \int_Y \mu (\yy) \left( \nabla
      \varphi^0(\xx)+ \nabla_{\yy}\varphi^1(\xx,\yy) \right) \cdot
    \left( \nabla \varphi^0(\xx)+ \nabla_{\yy} \varphi^1(\xx,\yy)
    \right) \di \yy \di \xx. 
\end{align}
where, in the second to last identity, we use the fact that $\varphi^0 \in C^1(\bar{\Omega})$ (\cref{regularity-phi0}).

\item 
To deal with $\mathcal{I}_2$, we first observe that, by \eqref{eq:p452}, we have:
\begin{align*}
    \mu \left( \frac{\xx}{\varepsilon} \right)\left( \nabla
      \varphi^0+ \nabla_{\yy} \varphi^1 \left( \xx,
        \frac{\xx}{\varepsilon} \right) \right)\cdot \nabla
    \varphi^{\varepsilon}
    = \mu \left( \frac{\xx}{\varepsilon} \right) \left[
      \frac{\partial\varphi^0}{\partial x_i} \left( \ee^i + \nabla_{\yy}
        \omega^i \left( \frac{\xx}{\varepsilon} \right) \right)
    \right] \cdot \nabla \varphi^{\varepsilon}.
\end{align*}
By \cref{regularity-phi0} and \eqref{eq:p453}, $\varphi^0 \in C^1(\bar{\Omega})$  and $\omega^i \in L^2_{\per}(Y)$. Therefore, we can regard $\mu \left(\yy \right) \left[
  \frac{\partial \varphi^0}{\partial x_i}(\xx) \left( \ee^i +
    \nabla_{\yy} \omega^i \left( \yy \right)
  \right) \right] \in L^2_{\per}(Y, C(\bar{\Omega}))$ as a test
function when evaluating the limit of $\mathcal{I}_2$ as $\varepsilon\to0$: 
\begin{align}
\label{eq:p460}
  \begin{split}
    \lim_{\varepsilon \to 0} \mathcal{I}_2
    &= - \frac{2}{\abs{Y}} \int_{\Omega}\int_Y \mu (\yy) \left( \nabla
    \varphi^0(\xx) + \nabla_{\yy}\varphi^1(\xx,\yy) \right)\cdot\left( \nabla
    \varphi^0(\xx) + \nabla_{\yy}\varphi^1(\xx,\yy) \right)\di \yy \di \xx.
  \end{split}
\end{align}

\item 
Finally, for the integral $\mathcal{I}_3$, we use \eqref{eq:p452} to
write: 
\begin{align*}
  \begin{split}
\mu \left( \frac{\xx}{\varepsilon} \right) \left( \nabla \varphi^0
  (\xx) + \nabla_{\yy} \varphi^1 \left( \xx, \frac{\xx}{\varepsilon}
  \right) \right) \cdot \left( \nabla \varphi^0 (\xx) + \nabla_{\yy}
  \varphi^1 \left(\xx, \frac{\xx}{\varepsilon}\right) \right)
\\
= \mu \left( \frac{\xx}{\varepsilon} \right) \frac{\partial
  \varphi^0}{\partial x_i}(\xx) \frac{\partial \varphi^0}{\partial x_j}
  (\xx) \left( \ee^i + \nabla_{\yy}\omega^i \left(
  \frac{\xx}{\varepsilon} \right) \right) \cdot \left( \ee^j +
  \nabla_{\yy} \omega^j \left( \frac{\xx}{\varepsilon} \right)
\right).
  \end{split}
\end{align*}
Regarding $\frac{\partial \varphi^0}{\partial x_i}(\xx) \frac{\partial
  \varphi^0}{\partial x_j}(\xx)$ as a test function, by \cref{ave-lem}
we have: 
\begin{align}
\label{eq:p462}
  \begin{split}
    \lim_{\varepsilon \to 0} \mathcal{I}_3
  &= \frac{1}{\abs{Y}} \int_{\Omega} \int_Y \mu(\yy) \left( \nabla
    \varphi^0(\xx) + \nabla_{\yy} \varphi^1(\xx,\yy) \right) \cdot
  \left( \nabla\varphi^0(\xx) + \nabla_{\yy} \varphi^1(\xx,\yy)
  \right) \di \yy \di \xx.
  \end{split}
\end{align}
To that end, the result in \eqref{eq:p463} follows from \eqref{eq:p457}, \eqref{eq:p458}, \eqref{eq:p460} and
\eqref{eq:p462}.
\end{enumerate}
\end{proof}

 At this point, we can conclude from \eqref{eq:p463} and H\"{o}lder's inequality that:
\begin{align}
\label{eq:p466}
\lim_{\varepsilon \to 0}\norm{\bT(\varphi^{\varepsilon})(\cdot) -
  \bT^{0}\left(\cdot, \frac{\cdot}{\varepsilon}\right)}_{L^1(\Omega)^{d \times d}} = 0,
\end{align}
where:
\begin{align*}
    \bT^{0}(\xx,\yy)
    &\coloneqq S \mu \left( \yy \right)
    \left(\vphantom{\frac{1}{2}} \left[ \nabla \varphi^0(\xx) +
        \nabla_{\yy} \varphi^1(\xx,\yy) \right] \otimes \left[ \nabla
        \varphi^0(\xx) + \nabla_{\yy} \varphi^1(\xx,\yy) \right]\right.\\
    &\qquad- \left.  \frac{1}{2} \abs{\nabla \varphi^0(\xx) +
        \nabla_{\yy} \varphi^1(\xx,\yy)}^2 \II \right).
\end{align*}

\subsection{The Coupled Conservation of Momentum Equation}
\label{sec:coupled-equation-2}

In this section, we establish the existence and derive a priori estimates for the velocity $\uu^\varepsilon$ and the pressure $p^\varepsilon$ in \eqref{eq:pn410}. We prove the existence of the solution $\uu^\varepsilon$ of \eqref{eq:p411} in $\calU^\varepsilon$ \eqref{spaceU}. Multiplying \eqref{eq:p411} by $\vv\in\calU^{\varepsilon}$ and integrating by parts, we deduce that: 
\begin{align}
\label{eq:p436}
  2 \int_{\Omega} \DD(\uu^{\varepsilon}):\DD(\vv) \di \xx
  &=  -
    \int_{\Omega} \bT (\varphi^{\varepsilon}):\DD(\vv) \di \xx,
\end{align}
where the boundary terms vanish by \eqref{eq:p415} and \eqref{eq:p413}, and $\charac_{\Omega_f^{\varepsilon}}$ is the characteristic function of the domain $\Omega_f^{\varepsilon}$.
Define: 
\begin{align*}
  a(\uu,\vv)
  \coloneqq 2 \int_{\Omega} \DD(\uu) : \DD(\vv) \di \xx,
\end{align*}
which is a bilinear form on $\calU^{\varepsilon}$.  Note that $a(\cdot,\cdot)$ is continuous and coercive on $\calU^{\varepsilon}$.
\medskip

Assuming that $\mu$ and $\Omega$ satisfy the regularity
assumptions in \cref{sec:coupled-equation-6}, we have that $\bT (\varphi^{\varepsilon})\in L^2(\Omega)^{d \times d}$. Now, define the linear form $\ell$
on $\calU^{\varepsilon}$ by:
\begin{align*}
\ell(\vv) \coloneqq  -
    \int_{\Omega} \bT (\varphi^{\varepsilon}):\DD(\vv) \di \xx.
\end{align*}
Observe that $\ell(\cdot)$ is continuous on $\calU^{\varepsilon}$. The Lax--Milgram theorem can be applied to the variational problem given by: 
\begin{align}
\label{eq:p353}
\text{Find $\uu^{\varepsilon}\in \calU^{\varepsilon}$ such that }
  a(\uu^{\varepsilon},\ww) = \ell(\ww), \text{ for all } \ww \in \calU^{\varepsilon},
\end{align}
to show that it has a unique solution $\uu^{\varepsilon} \in \calU^{\varepsilon}$ that
satisfies: 
\begin{align}
\label{eq:p437}
\begin{split}
  \norm{\uu^{\varepsilon}}_{H^1_0(\Omega)^d}
  &\le C \left( \int_{\Omega} \abs{\kk}^s \di \xx \right)^\frac{1}{2},
  \end{split}
\end{align}
where the last estimate follows from \cref{sec:coupled-equation-6}.

We introduce the spaces:
\begin{align}
\label{defVePe}
\begin{split}
  \calV^{\varepsilon}
  &\coloneqq \left\{ \vv \in H_0^1(\Omega)^d\colon \DD(\vv) = 0 \text{
  in } \Omega_s^{\varepsilon} \right\},\\
  \calP^{\varepsilon}
  &\coloneqq \Div(\calV^{\varepsilon})=\left\{ q \in L_0^2(\Omega)\colon \exists \vv \in
    \calV^{\varepsilon} \text{ such that } q = \Div \vv \right\},
\end{split}
\end{align}
where 
$L_0^2(\Omega)
  \coloneqq \left\{ q \in L^2(\Omega)\colon \int_{\Omega} q \di \xx = 0\right\}.$
It can be shown that $\calV^{\varepsilon}$ and $\calP^{\varepsilon}$ are Hilbert spaces with respect to the $H_0^1-$ and $L^2-$inner products.
The following lemma deals with the existence of the pressure.

\begin{lemma}
\label{sec:coupled-equation-4}
There exists $p^{\varepsilon} \in \calP^{\varepsilon}$, which arises as a Lagrange multiplier of
the minimization problem given by:
\begin{equation}
  \label{eq:p357}
  \vv = \argmin_{\ww \in \calV^{\varepsilon}} J(\ww), \qquad
  \text{subject to } G(\ww) = 0,
\end{equation}
where 
 $J(\vv)\coloneqq \frac{1}{2}a(\vv,\vv) - \ell (\vv),$ and
 $G(\vv)\coloneqq \Div \vv.$ 
The function $p^{\varepsilon}$ is called the pressure, and satisfies: 
\begin{align}
  \label{eq:p438}
  2 \int_{\Omega} \DD(\uu^{\varepsilon}):\DD(\vv) \di \xx
  -\int_{\Omega} p^{\varepsilon} \Div \vv \di \xx
  &= 
    -\int_{\Omega} \bT (\varphi^{\varepsilon}):\DD(\vv) \di \xx, \text{
      for } \vv \in \calV^{\varepsilon}.
\end{align}
\end{lemma}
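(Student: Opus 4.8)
The plan is to obtain $p^{\varepsilon}$ as the Lagrange multiplier of the divergence constraint via a de~Rham--type (closed range) argument. First I would note that the feasible set $\{\ww\in\calV^{\varepsilon}:G(\ww)=0\}$ of \eqref{eq:p357} is exactly $\calU^{\varepsilon}$: inside $\Omega_s^{\varepsilon}$ the constraint $\DD(\ww)=0$ already forces $\Div\ww=\tr\DD(\ww)=0$, so imposing $\Div\ww=0$ on all of $\Omega$ is the same as imposing it on $\Omega_f^{\varepsilon}$. Since $J$ is the quadratic functional of the continuous, coercive form $a$ and the continuous linear form $\ell$ on the Hilbert space $\calV^{\varepsilon}$, its unique minimizer over the closed subspace $\calU^{\varepsilon}$ is characterized by the Euler--Lagrange identity $a(\uu^{\varepsilon},\vv)=\ell(\vv)$ for all $\vv\in\calU^{\varepsilon}$; that is, the minimizer of \eqref{eq:p357} is precisely the $\uu^{\varepsilon}$ produced in \eqref{eq:p353}.

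Next, introduce the residual functional $\Phi\colon\calV^{\varepsilon}\to\RR$, $\Phi(\vv)\coloneqq a(\uu^{\varepsilon},\vv)-\ell(\vv)$. It is bounded and linear (continuity of $a$ and $\ell$, with $\uu^{\varepsilon}\in\calV^{\varepsilon}$ fixed), and by the previous step it vanishes on $\calU^{\varepsilon}=\ker\!\left(\Div\big|_{\calV^{\varepsilon}}\right)$. The structural input --- recorded in the discussion before the lemma --- is that $\calP^{\varepsilon}=\Div(\calV^{\varepsilon})$ is a closed subspace of $L^2_0(\Omega)$, hence itself a Hilbert space; equivalently $\Div\big|_{\calV^{\varepsilon}}$ has closed range. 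Therefore $\Div\big|_{\calV^{\varepsilon}}\colon\calV^{\varepsilon}\to\calP^{\varepsilon}$ is a bounded surjection of Hilbert spaces, so its restriction to $(\calU^{\varepsilon})^\perp$ (orthogonal complement in $\calV^{\varepsilon}$) is a continuous bijection onto $\calP^{\varepsilon}$ whose inverse $R^{\varepsilon}\colon\calP^{\varepsilon}\to(\calU^{\varepsilon})^\perp\subset\calV^{\varepsilon}$ is bounded, by the open mapping theorem.

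Then $q\mapsto\Phi(R^{\varepsilon}q)$ is a bounded linear functional on the Hilbert space $\calP^{\varepsilon}$, so by the Riesz representation theorem in the $L^2(\Omega)$ inner product there is a unique $p^{\varepsilon}\in\calP^{\varepsilon}$ with $\Phi(R^{\varepsilon}q)=\int_\Omega p^{\varepsilon}q\di\xx$ for every $q\in\calP^{\varepsilon}$. Since any $\vv\in\calV^{\varepsilon}$ satisfies $\vv-R^{\varepsilon}(\Div\vv)\in\calU^{\varepsilon}$, on which $\Phi$ vanishes, one gets $\Phi(\vv)=\Phi\!\left(R^{\varepsilon}(\Div\vv)\right)=\int_\Omega p^{\varepsilon}\Div\vv\di\xx$ for all $\vv\in\calV^{\varepsilon}$, which is exactly \eqref{eq:p438} once the definitions of $\Phi$, $a$, $\ell$ are unwound. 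The Lagrange-multiplier interpretation then follows because $(\uu^{\varepsilon},p^{\varepsilon})$ is the unique saddle point of $\mathcal{L}(\vv,q)\coloneqq J(\vv)-\int_\Omega q\,\Div\vv\di\xx$ on $\calV^{\varepsilon}\times\calP^{\varepsilon}$: stationarity in $\vv$ gives \eqref{eq:p438}, stationarity in $q$ gives $\Div\uu^{\varepsilon}=0$ (already known, since $\uu^{\varepsilon}\in\calU^{\varepsilon}$), and coercivity of $a$ forces uniqueness.

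I expect the one genuinely substantive ingredient to be the closedness of $\calP^{\varepsilon}$ in $L^2_0(\Omega)$ --- equivalently, the inf-sup / Ladyzhenskaya--Babu\v{s}ka--Brezzi estimate $\sup_{\vv\in\calV^{\varepsilon}\setminus\{0\}}\left(\int_\Omega q\,\Div\vv\di\xx\right)\big/\norm{\vv}_{H^1_0(\Omega)^d}\ge\beta\,\norm{q}_{L^2(\Omega)}$ for $q\in\calP^{\varepsilon}$ --- which is precisely what makes $R^{\varepsilon}$ bounded. For each fixed $\varepsilon$ this can be secured by a Bogovskii-type / restriction-operator construction of a bounded right inverse of $\Div$ that respects the rigid-motion constraint on $\Omega_s^{\varepsilon}$, and it is exactly the Hilbert-space property of $\calP^{\varepsilon}$ asserted in the text, which I take as given; everything else is the routine functional-analytic bookkeeping above.
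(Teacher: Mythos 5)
Your proof is correct and rests on the same two pillars as the paper's: that the $\uu^{\varepsilon}$ produced by \eqref{eq:p353} is the unique constrained minimizer of $J$ (by symmetry, continuity and coercivity of $a$, together with the observation that the feasible set is exactly $\calU^{\varepsilon}$), and that $\calP^{\varepsilon}=\Div(\calV^{\varepsilon})$ is a Hilbert space, i.e., closed in $L_0^2(\Omega)$. The only difference is procedural: the paper checks that $G$ is $C^1$ with surjective derivative onto $\calP^{\varepsilon}$ and then cites the abstract Lagrange multiplier theorem, obtaining $p^{\varepsilon}\in\left(\calP^{\varepsilon}\right)^{*}\cong\calP^{\varepsilon}$, whereas you inline the proof of that theorem in this Hilbert-space setting --- factoring the residual functional $\Phi=a(\uu^{\varepsilon},\cdot)-\ell(\cdot)$, which vanishes on $\ker\left(\Div\vert_{\calV^{\varepsilon}}\right)=\calU^{\varepsilon}$, through the bounded right inverse $R^{\varepsilon}$ furnished by the open mapping theorem, and then applying Riesz representation in $\calP^{\varepsilon}$. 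Both routes are valid, and both defer the one genuinely substantive input --- the closedness of $\calP^{\varepsilon}$, secured by a bounded right inverse of the divergence compatible with the rigid constraint --- to the preceding discussion, exactly as you note.
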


\begin{proof}
\begin{enumerate}[wide]
\item 
Since the bilinear form $a(\cdot, \cdot)$ is symmetric, the variational problem
\eqref{eq:p353} is equivalent to the minimization problem \eqref{eq:p357}.
Thus, the existence of the unique solution $\uu^{\varepsilon}$ of the variational
problem \eqref{eq:p353} implies that $\uu^{\varepsilon}$ is also the
unique solution of the minimization problem \eqref{eq:p357}.

\item 
  Fix $\uu \in \calV^{\varepsilon}$ and let
  $\ww \in \calV^{\varepsilon}$. Then
  $G(\uu+\ww) - G(\uu) = \Div \ww$. Since
  $\norm{\Div \ww}_{L^2(\Omega)} \le \norm{\nabla \ww}_{L^2(\Omega)^{d\times d}}
  \equiv \norm{\ww}_{H_0^1(\Omega)^d}$, the Fr\'{e}chet derivative of $G\colon \calV^{\varepsilon} \to \calP^{\varepsilon}$
  at $\uu$ is given by:
$G'(\uu) \colon \calV^{\varepsilon} \to \calP^{\varepsilon},~  
\vv \mapsto \Div \vv.$
By construction, $G'(\uu)$ is surjective. Moreover, it is clear that
$G' \colon \calV^{\varepsilon} \to L(\calV^{\varepsilon},
\calP^{\varepsilon})$ is continuous, where $L(\calV^{\varepsilon},
\calP^{\varepsilon})$ is the spaces of bounded linear mappings from $\calV^{\varepsilon}$ to $\calP^{\varepsilon}$. Therefore, $G$ is $C^1$.

\item
  Since $J$ is the sum of a bounded bilinear function and a bounded linear function, then it is $C^1$.  By the Lagrange Multiplier~theorem {\cite[Section
    4.14]{zeidlerAppliedFunctionalAnalysis1995}} and the fact that $\calP^\varepsilon$ is a Hilbert space, there exists
  $p^{\varepsilon} \in \left( \calP^{\varepsilon} \right)^{*} \cong \calP^\varepsilon$, where $\cong$ means isomorphic, such that:
$
a(\uu^{\varepsilon}, \cdot) - \ell(\cdot) - p^{\varepsilon} \Div (\cdot)
  = 0 \text{ in } \calV^{\varepsilon}.
$ 
That is, $(\uu^{\varepsilon},p^{\varepsilon})$ satisfies \eqref{eq:p438}. 
\end{enumerate}
\end{proof}

Now, in \eqref{eq:p438}, we consider $\vv \in H_0^1(\Omega_f^\varepsilon)$, which can be extended by zero to
$\Omega_s^{\varepsilon}$ so that it belongs to $\calV^{\varepsilon}$,  
and integrate by parts over $\Omega_f^{\varepsilon}$, to obtain:
\begin{align*}
  \begin{split}
&\left\langle \nabla p^{\varepsilon},\vv
  \right\rangle_{H^{-1}(\Omega_f^{\varepsilon}),H_0^1(\Omega_f^{\varepsilon})^d}
  = \int_{\Omega_f^{\varepsilon}} \nabla p^{\varepsilon}\cdot \vv \di\xx
    = -\int_{\Omega_f^{\varepsilon}} p^{\varepsilon} \Div \vv \di \xx \\
  & \qquad=  -
    \int_{\Omega_f^{\varepsilon}} \bT (\varphi^{\varepsilon}):\DD(\vv) \di \xx
   -2\int_{\Omega_f^{\varepsilon}}
    \DD(\uu^{\varepsilon}) : \DD(\vv) \di \xx.
  \end{split}
\end{align*}
The above implies that:
\begin{align}
  \label{eq:p345}
  \begin{split}
  \norm{\nabla p^{\varepsilon}}_{H^{-1}(\Omega_f^{\varepsilon})}
    &\le C \left( \int_{\Omega} \abs{\kk}^s \di \xx \right)^\frac{1}{2},
  \end{split}
\end{align}
where the last inequality follows from \eqref{eq:p437} and \eqref{eq:493}.

The next corollary follows from the fact that   ${\norm{p^\varepsilon}_{L_0^2(\Omega_f^{\varepsilon})} \le \norm{\nabla
    p^\varepsilon}_{H^{-1}(\Omega_f^{\varepsilon})}}$  (see \cite[Lemma
IV.1.9]{boyerMathematicalToolsStudy2013a}) and  $p^{\varepsilon} = 0$ in
$\Omega_s^{\varepsilon}$, together with \eqref{eq:p345} and \eqref{eq:p437}.
\begin{corollary}
\label{sec:coupled-equation-5}
For each $\varepsilon > 0,$ there exists a unique solution
$(\uu^{\varepsilon},p^{\varepsilon}) $ of \eqref{eq:p410} in $ {H_0^1(\Omega)^d \times
L_0^2(\Omega)}$ satisfying the a priori estimate: 
\begin{align*}
  \norm{\uu^{\varepsilon}}_{H_0^1(\Omega)^d} + \norm{p^{\varepsilon}}_{L_0^2(\Omega)}
  &\le C  \left( \int_{\Omega} \abs{\kk}^s \di \xx \right)^\frac{1}{2},
\end{align*}
where the constant $C$ does not depend on $\varepsilon.$ Moreover,
$p^{\varepsilon} = 0$ on $\Omega_s^{\varepsilon}$.
\end{corollary}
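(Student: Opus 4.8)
The plan is to treat this statement as a consolidation of the facts already assembled in this subsection, so the proof is largely bookkeeping: first produce the solution pair, then chain the a priori bounds keeping track of $\varepsilon$-dependence, then deduce uniqueness from linearity. A first observation to record at the outset is that $p^{\varepsilon} = 0$ on $\Omega_s^{\varepsilon}$ is automatic: every element of $\calP^{\varepsilon} = \Div(\calV^{\varepsilon})$ has the form $\Div\vv$ with $\DD(\vv) = 0$ on $\Omega_s^{\varepsilon}$, and since $\Div\vv = \tr\DD(\vv)$, each such element vanishes a.e.\ on $\Omega_s^{\varepsilon}$.

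For existence, the velocity $\uu^{\varepsilon}$ is supplied by Lax--Milgram applied to \eqref{eq:p353} (equivalently the constrained minimization \eqref{eq:p357}); the constraint $\Div\ww = 0$ forces $\Div\uu^{\varepsilon} = 0$ on all of $\Omega$, so in fact $\uu^{\varepsilon} \in \calU^{\varepsilon} \subset \calV^{\varepsilon}$. The pressure $p^{\varepsilon} \in \calP^{\varepsilon}$ is the Lagrange multiplier from \cref{sec:coupled-equation-4}, and the pair satisfies the weak identity \eqref{eq:p438}, which encodes \eqref{eq:p410} together with the balance relations \eqref{eq:p413} and the boundary conditions \eqref{eq:p415}. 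For the a priori estimate, the velocity bound is exactly \eqref{eq:p437}, whose right-hand side is already $\varepsilon$-free by \cref{sec:coupled-equation-6}. For the pressure, use $p^{\varepsilon} = 0$ on $\Omega_s^{\varepsilon}$ and $\int_{\Omega} p^{\varepsilon} = 0$ to get $p^{\varepsilon} \in L_0^2(\Omega_f^{\varepsilon})$ with $\norm{p^{\varepsilon}}_{L_0^2(\Omega)} = \norm{p^{\varepsilon}}_{L_0^2(\Omega_f^{\varepsilon})}$, then apply the Ne\v{c}as-type inequality $\norm{p^{\varepsilon}}_{L_0^2(\Omega_f^{\varepsilon})} \le \norm{\nabla p^{\varepsilon}}_{H^{-1}(\Omega_f^{\varepsilon})}$ together with \eqref{eq:p345}, whose right-hand side is $\varepsilon$-independent via \eqref{eq:p437} and \eqref{eq:493}. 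Adding the two bounds yields the claim.

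For uniqueness, note that \eqref{eq:p410} in the form \eqref{eq:p438} is linear in $(\uu,p)$, so the difference of two solutions $(\uu,p)$ solves the homogeneous problem with $\uu \in \calU^{\varepsilon}$ and $p \in \calP^{\varepsilon}$. Taking $\vv = \uu$ in the homogeneous \eqref{eq:p438} and using $\Div\uu = 0$ gives $a(\uu,\uu) = 0$, and coercivity of $a$ on $\calU^{\varepsilon}$ forces $\uu = 0$. Then \eqref{eq:p438} reduces to $\int_{\Omega} p\,\Div\vv = 0$ for all $\vv \in \calV^{\varepsilon}$; restricting to $\vv$ obtained by extending functions of $H_0^1(\Omega_f^{\varepsilon})^d$ by zero gives $\nabla p = 0$ in $H^{-1}(\Omega_f^{\varepsilon})$, and a further use of the Ne\v{c}as inequality gives $p = 0$ on $\Omega_f^{\varepsilon}$, hence on $\Omega$.

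The one step that is not pure assembly, and the point I would treat most carefully, is the $\varepsilon$-uniformity of the constant in the Ne\v{c}as/de Rham inequality on the perforated domain $\Omega_f^{\varepsilon}$ --- equivalently, that the inf--sup constant for $\Div \colon H_0^1(\Omega_f^{\varepsilon})^d \to L_0^2(\Omega_f^{\varepsilon})$ does not degenerate as $\varepsilon \to 0$. This is precisely where the periodic structure of the suspension enters: because the inclusions occupy a fixed cell pattern and remain a uniform distance $O(\varepsilon)$ apart (and away from $\partial\Omega$), one can invoke the classical uniform Bogovski\u{\i}/restriction operator construction for periodically perforated domains to keep the constant bounded. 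Once that uniformity is in place, the remaining arguments above are routine.
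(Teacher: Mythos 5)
Your proof follows the same route as the paper: existence and the velocity bound come from Lax--Milgram and the Lagrange-multiplier construction of \cref{sec:coupled-equation-4}, the pressure bound from the Ne\v{c}as inequality $\norm{p^{\varepsilon}}_{L_0^2(\Omega_f^{\varepsilon})} \le \norm{\nabla p^{\varepsilon}}_{H^{-1}(\Omega_f^{\varepsilon})}$ combined with \eqref{eq:p345} and \eqref{eq:p437}, and the vanishing of $p^{\varepsilon}$ on $\Omega_s^{\varepsilon}$ from $p^{\varepsilon}\in\calP^{\varepsilon}=\Div(\calV^{\varepsilon})$ together with $\Div\vv=\tr\DD(\vv)$. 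The one place you go beyond the paper is in flagging that the Ne\v{c}as (inf--sup) constant on the perforated domain must be uniform in $\varepsilon$: the paper invokes the inequality without comment, whereas the constant a priori depends on $\Omega_f^{\varepsilon}$, and your remark that the uniform Bogovskii/restriction-operator construction for periodically perforated domains is what keeps it bounded is exactly the justification needed for the constant $C$ in the corollary to be $\varepsilon$-independent.
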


\subsection{Proof of Theorem \ref{thm:main}}
\label{sec:proof}
Finally, we are ready to prove the main theorem of this paper, whose key steps are assembled from results presented above.
\begin{proof}[Proof]
\begin{enumerate}[wide]
\item The estimates in \cref{sec:coupled-equation-5} and
  \cref{sec:coupled-equation-6} imply that there exist 
  $\uu^0 \in H_0^1(\Omega)^d$,  $\uu^1 \in L^2(\Omega,
  H_{\per}^1(Y)^d/\RR)$, and $p^0 \in L^2(\Omega \times Y)/\RR$ satisfying (up to a subsequence):
\begin{align} \label{eq:p439} 
  \uu^{\varepsilon}
  \tscale \uu^0,\qquad
  \nabla \uu^{\varepsilon}
  \tscale \nabla \uu^0 + \nabla_{\yy} \uu^1,\qquad
  p^{\varepsilon}
  \tscale p^0.
\end{align}

\item
  Let $\psi \in H_0^1(\Omega)$ and regard $H_0^1(\Omega)$ as a
  subspace of $L^2(\Omega \times Y)$. We deduce from \eqref{eq:p419},
  \eqref{eq:p416} and 
  \eqref{eq:p439} that:
\begin{align*}
  \begin{split}
  0 &= \lim_{\varepsilon \to 0} \int_{\Omega} \psi \Div
  \uu^{\varepsilon} \di \xx = -\lim_{\varepsilon \to 0}\int_{\Omega}
  \uu^{\varepsilon} \cdot \nabla \psi\di \xx 
  = -\frac{1}{\abs{Y}} \int_{\Omega} \int_Y \uu^0 \cdot\nabla \psi
  \di \yy \di \xx \\
  &= - \int_{\partial \Omega} \psi \uu^0 \cdot\nn_{\partial\Omega} \di \Gamma (\xx) +
  \int_{\Omega} \psi \Div \uu^0 \di \xx
  = \int_{\Omega} \psi \Div \uu^0 \di \xx.
  \end{split}
\end{align*}
Therefore, we obtain that
$\Div \uu^0 = 0 \text{ in } \Omega.$
Moreover,
$0 = \Div \uu^{\varepsilon} = \mathrm{tr}(\nabla \uu^{\varepsilon}) \tscale
  \mathrm{tr}(\nabla \uu^0 + \nabla_{\yy} \uu^1) = \Div \uu^0 + \Div_{\yy} \uu^1, 
$
so 
$\Div_{\yy} \uu^1 = 0 \text{ in }\Omega \times Y$.
\medskip

\item 
Let $\vec{\Psi} \in \mathcal{D}\left(\Omega,
  C_{\per}^{\infty}(Y)^{d\times d}\right)$ be supported in $\Omega \times
  Y_s$. From \eqref{eq:p420} and 
  \eqref{eq:p439},
  we have:
$$ 
0 = \lim_{\varepsilon \to 0} { \int_{\Omega}
  \DD(\uu^{\varepsilon}):\vec{\Psi} \left(\xx,\frac{\xx}{\varepsilon}\right)} \di \xx = \frac{1}{\abs{Y}}
  \int_{\Omega}\int_{Y_s} \left[\DD(\uu^0) + \DD_{\yy}(\uu^1)\right] :
  \vec{\Psi}{(\xx,\yy)} \di\yy \di \xx,
$$
which implies that 
$ \DD(\uu^0) + \DD_{\yy}(\uu^1) = 0, \text{ on } \Omega \times Y_s.$
\medskip

\item \label{itemDivy}
In \eqref{eq:p438}, by letting $\vv(\xx) = \vec{\psi} \left( \xx,
  \frac{\xx}{\varepsilon} \right)$, for $\vec{\psi} \in
\mathcal{D}(\Omega, C_{\per}^{\infty}(Y)^d)$ supported in {$ \Omega \times Y_f$}, we get:
\begin{align*}
  \begin{split}
    2 \int_{\Omega} \DD(\uu^{\varepsilon}): \left[
      \DD_{\xx}(\vec{\psi}) +
      \frac{1}{\varepsilon}\DD_{\yy}(\vec{\psi}) \right] \di \xx -
    \int_{\Omega} p^{\varepsilon} \left( \Div_{\xx} \vec{\psi} +
      \frac{1}{\varepsilon} \Div_{\yy} \vec{\psi} \right) \di
    \xx \\
    =  -
    \int_{\Omega} \bT (\varphi^{\varepsilon}):\left[
      \DD_{\xx}(\vec{\psi}) +
      \frac{1}{\varepsilon}\DD_{\yy}(\vec{\psi}) \right] \di \xx.
  \end{split}
\end{align*}
Multiplying by $\varepsilon$ and passing to the limit as $\varepsilon \to
0$ yields: 
\begin{align}
  \label{eq:p442}
  \begin{split}
  &\frac{2}{ \abs{Y}} \int_{\Omega} \int_Y \left[ \DD_{\xx}(\uu^0) +
  \DD_{\yy}(\uu^1) \right] : \DD_{\yy}(\vec{\psi})\di \yy \di
  \xx \\
  &\quad- \frac{1}{\abs{Y}} \int_{\Omega}\int_Y p^0 \Div_{\yy}
  \vec{\psi}\di \yy \di \xx 
  = -\lim_{\varepsilon \to 0}\int_{\Omega} \bT
  (\varphi^{\varepsilon}):\DD_{\yy}(\vec{\psi}) \di \xx.
  \end{split}
\end{align}

Observe that: 
\begin{align*}
&\abs{\int_{\Omega}
    \bT(\varphi^{\varepsilon}(\xx)) : \DD_{\yy}\left( \vec{\psi}
      \left( \xx, \frac{\xx}{\varepsilon} \right) \right) \di \xx - \frac{1}{\abs{Y}} \int_{\Omega} \int_Y \bT^{0}(\xx,\yy) :
  \DD_{\yy} \left( \vec{\psi}(\xx,\yy) \right) \di \yy \di \xx}\\
  &\le \abs{\int_{\Omega}
    \bT(\varphi^{\varepsilon}(\xx)) : \DD_{\yy}\left( \vec{\psi}
      \left( \xx, \frac{\xx}{\varepsilon} \right) \right) \di \xx - \int_{\Omega} \bT^{0} \left( \xx,
      \frac{\xx}{\varepsilon} \right) : \DD_{\yy} \left( \vec{\psi}
  \left( \xx, \frac{\xx}{\varepsilon} \right) \right) \di \xx}\\
  &+ \abs{\int_{\Omega} \bT^{0} \left( \xx,
      \frac{\xx}{\varepsilon} \right) : \DD_{\yy} \left( \vec{\psi}
      \left( \xx, \frac{\xx}{\varepsilon} \right) \right) \di \xx - \frac{1}{\abs{Y}} \int_{\Omega} \int_Y \bT^{0}(\xx,\yy) :
    \DD_{\yy} \left( \vec{\psi}(\xx,\yy) \right) \di \yy \di \xx},
\end{align*}
hence, by \eqref{eq:p466} and \cref{ave-lem}, we deduce
$ 
    \lim_{\varepsilon \to 0} \int_{\Omega}
    \bT(\varphi^{\varepsilon}(\xx)) : \DD_{\yy}\left( \vec{\psi}
      \left( \xx, \frac{\xx}{\varepsilon} \right) \right) \di \xx
    = \frac{1}{\abs{Y}} \int_{\Omega} \int_Y \bT^{0}(\xx,\yy) :
    \DD_{\yy} \left( \vec{\psi}(\xx,\yy) \right) \di \yy \di \xx.
$ 
Therefore, \eqref{eq:p442} becomes: 
\begin{align}
\label{eq:p471}
\int_{\Omega} \int_Y \left( \vec{\sigma}^0 + \bT^0 \right) : \DD_{\yy}(\vec{\psi}) \di \yy \di \xx = 0.
\end{align}

In \eqref{eq:p471}, we let $\vec{\psi}(\xx,\yy) = \phi(\xx)\vec{\theta} \left( \yy \right)$, where $ \phi \in C_c^{\infty}(\Omega)$ and $\vec{\theta} \in C^{\infty}_{c} (Y_f)^d$. Then $\int_{\Omega} \phi (\xx) \left[ \int_Y \left( \vec{\sigma}^0 + \bT^0 \right) : \DD_{\yy}(\vec{\theta}) \di \yy \right] \di \xx = 0$, so $\int_Y \left( \vec{\sigma}^0 + \bT^0 \right) : \DD_{\yy}(\vec{\theta}) \di \yy = 0$ for all $\xx \in \Omega$. Integrating by parts with respect to $\yy$ to obtain: 
\begin{align}
\label{eq:divy}
\Div_{\yy} \left(\vec{\sigma}^0+\bT^0 \right) = 0 \text{ in } \Omega \times Y_f.
\end{align} 

\item Let $\vec{\theta} \in C^{\infty}_{c} (Y)$ such that $\DD_{\yy} (\vec{\theta}) = 0$ in $Y_s$. The same argument as in point \ref{itemDivy} for test function $\vv(\xx) = \vec{\theta} \left(\frac{\xx}{\varepsilon}\right)$ leads to $\int_{\Omega} \int_Y \left( \vec{\sigma}^0 + \bT^0 \right) : \DD_{\yy}(\vec{\theta}) \di \yy \di \xx = 0$.
Integrating by parts with respect to $\yy$ and using \eqref{eq:divy}, we obtain
two balance equations $\int_{\Omega}\int_{\Gamma} \left(\vec{\sigma}^0 + \bT^0 \right) \nn_{\Gamma}\di \Gamma_{\yy} \di \xx = 0$ and $\int_{\Omega} \int_{\Gamma} \left[ \left(
  \vec{\sigma}^0 +
  \bT^0 \right)\nn_{\Gamma} \right] \times \nn_{\Gamma}\di \Gamma_{\yy} \di \xx
  = 0$. 
\medskip

\item
  By \eqref{eq:p439}, we have that 
$\vec{\sigma}^{\varepsilon} = 2 \DD(\uu^{\varepsilon})
-p^{\varepsilon}\II$ two-scale converges to $\vec{\sigma}^0\coloneqq 2 \left( \DD_{\xx}(\uu^0) +
  \DD_{\yy}(\uu^1) \right) - p^0 \II$; and, therefore, it converges to
$ \frac{1}{\abs{Y}} \int_{\Omega} \int_Y \vec{\sigma}^0(\xx,\yy) \di
\yy$ weakly in $L^2(\Omega)$.
By \eqref{eq:p411}, we have $-\Div \vec{\sigma}^{\varepsilon} =
 \Div \bT(\varphi^{\varepsilon})$
in $\Omega_f^{\varepsilon}$.
Moreover, since both $\DD(\uu^{\varepsilon})$ and $p^{\varepsilon} \II$ vanish on $\Omega_s^{\varepsilon}$, we deduce that
$\vec{\sigma}^{\varepsilon} = 0$ in $\Omega_s^{\varepsilon}$.
Therefore:
\begin{align}
\label{eq:p368}
-\Div \vec{\sigma}^{\varepsilon} =
  \charac_{\Omega_f^{\varepsilon}} \Div \bT(\varphi^{\varepsilon}),
  \text{ in } \Omega.
\end{align}
Thus, for any $\vec{\psi} \in H_0^1(\Omega)^d$, using integration by parts over $\Omega$ and \eqref{eq:p413}, we obtain:
\begin{align*}
  \int_{\Omega} \vec{\sigma}^{\varepsilon} : \DD(\vec{\psi}) \di \xx
  = \int_{\Omega} \left( -\Div \vec{\sigma}^{\varepsilon} \right)
  \vec{\psi}\di \xx 
  = - \int_{\Omega_f^{\varepsilon}} 
     \bT(\varphi^{\varepsilon}) : \DD (\vec{\psi})
     \di \xx.
\end{align*}
Taking $\varepsilon \to 0$ and using
    \eqref{eq:p466} together with \cref{ave-lem}
    yield: 
  \begin{align*}
  \begin{split}
    \frac{1}{\abs{Y}} \int_{\Omega} \int_Y \vec{\sigma}^0 :
    \DD(\vec{\psi}) \di \yy \di \xx
    &= 
    - \frac{1}{\abs{Y}}\int_{\Omega}\int_Y
    \bT^{0}:\DD(\vec{\psi})\di \yy \di \xx.
  \end{split}
\end{align*}
Integrating by parts over $\Omega$, we obtain: 
\begin{align*}
\int_{\Omega} \Div \left( \frac{1}{\abs{Y}} \int_Y
  \left( \vec{\sigma}^0 + \bT^{0} \right)\di
  \yy \right) \cdot \vec{\psi} \di \xx
  = 0, \text{ for all }
  \vec{\psi} \in H_0^1(\Omega)^d,
\end{align*}
which implies that 
$ \Div \left( \frac{1}{\abs{Y}} \int_Y \left( \vec{\sigma}^0 + \bT^0
  \right) \di \yy
  \right) = 0, \text{ on } \Omega.$ \medskip

\item We now prove that the triple $(\uu^{\varepsilon},
  p^{\varepsilon}, \varphi^{\varepsilon})$ two-scale converges to
  $(\uu^0, p^0, \varphi^0)$. To this end, we only need to show that the
  limits are unique. The uniqueness of $\varphi^0$ and $\varphi^1$ was
  proven above in \cref{sec:magn-equat-2}. We prove the uniqueness of $\uu^0$ and $p^0$ in this step and in the next step, respectively.

  Let $\vec{\phi}^0 \in
\mathcal{D}(\Omega)$ and $\vec{\phi}^1 \in
\mathcal{D} \left( \Omega, C_{\per}^{\infty}(Y)/\RR
\right)$.  Multiplying \eqref{eq:p368} by $\vec{\phi}(\xx) \coloneqq
\vec{\phi}^0(\xx) + \varepsilon \vec{\phi}^1 \left( \xx,
  \frac{\xx}{\varepsilon} \right)$, and integrating by parts over
$\Omega$, we obtain: 
\begin{align*}
  \int_{\Omega} \vec{\sigma}^{\varepsilon}: \DD(\vec{\phi}) \di \xx
  &=
-\int_{\Omega} \left( \Div \vec{\sigma}^{\varepsilon} 
  \right) \cdot \vec{\phi}\di \xx
  =  -
    \int_{\Omega_f^{\varepsilon}} \bT (\varphi^{\varepsilon}):\DD(\vec{\phi}) \di \xx.
\end{align*}
Note that $\DD(\vec{\phi}) = \DD(\vec{\phi}^0)(\xx) +
\varepsilon \DD_{\xx}(\vec{\phi}^1)+ \DD_{\yy}(\vec{\phi}^1)\left(
  \xx,\frac{\xx}{\varepsilon} \right)$.  Taking $\varepsilon\to0$, using $\vec{\sigma}^{\varepsilon}
\tscale \vec{\sigma}^0$ and \eqref{eq:p466}, we obtain:
\begin{align}
\label{eq:p380}
  \begin{split}
\frac{1}{\abs{Y}} &\int_{\Omega} \int_Y \vec{\sigma}^0 : \left[
  \DD(\vec{\phi}^0)+\DD_{\yy}(\vec{\phi}^1)\right] \di \yy \di
  \xx 
  \\
  &=  -
\frac{1}{\abs{Y}} \int_{\Omega}\int_Y \bT^0:\left[\DD(\vec{\phi}^0) +
  \DD_{\yy}(\vec{\phi}^1)  \right] \di \yy \di \xx,
  \end{split}
\end{align}
Suppose further that $\Div \vec{\phi}^0 = 0,$ $\Div_{\yy} \vec{\phi}^1 = 0$, then:
\begin{align}
\label{eq:p374}
  \begin{split}
\frac{2}{\abs{Y}} &\int_{\Omega} \int_Y \left[
  \DD(\uu^0)+\DD_{\yy}(\uu^1)\right] : \left[
  \DD(\vec{\phi}^0)+\DD_{\yy}(\vec{\phi}^1)\right] \di \yy \di
  \xx \\
&=  -
   \frac{1}{\abs{Y}} \int_{\Omega}\int_Y \bT^0:\left[\DD(\vec{\phi}^0) +
  \DD_{\yy}(\vec{\phi}^1)  \right] \di \yy \di \xx.
  \end{split}
\end{align}

Consider the space:
\begin{align}
\label{defH}
\calH 
&\coloneqq 
\Set*{(\vv^0,\vv^1) \in H_{0}^{1}(\Omega)^{d} \times L^2 (\Omega, H_{\per}^{1}(Y) /\RR) \given
\begin{aligned}
\Div \vv^0 &= 0 \text{ in } \Omega\\
\Div_{\yy} \vv^1 &= 0 \text{ in } \Omega \times Y\\
\DD(\vv^0) +\DD_{\yy}(\vv^1) &=0 \text{ in } \Omega \times Y_s
\end{aligned}
}.
\end{align}
Let $\calH$ be endowed with the inner product: 
\begin{align}
\label{innerpH}
\innerp{
(\vv^0,\vv^1), (\ww^0,\ww^1)
}_{\calH}
\coloneqq
\int_{\Omega} \nabla \vv^0 : \nabla \ww^0 \di \xx
+ \frac{1}{\abs{Y}}\int_{\Omega} \int_{Y} \nabla_{\yy} \vv^1 : \nabla_{\yy} \ww^1 \di \yy \di \xx,  
\end{align}
for all $(\vv^0,\vv^1),~ (\ww^0,\ww^1)$ in $\calH$.
Then it can be shown that $\calH$ is a Hilbert space.
By density, \eqref{eq:p374} holds for all $(\vec{\phi}^0,
\vec{\phi}^1)$ in $\calH$.
Now, let: 
\begin{align*}
    b((\vv^0,\vv^1),(\ww^0,\ww^1))
    \coloneqq
    \frac{2}{\abs{Y}} \int_{\Omega} \int_Y \left[
      \DD(\vv^0)+\DD_{\yy}(\vv^1)\right] : \left[
      \DD(\vec{\ww}^0)+\DD_{\yy}(\vec{\ww}^1)\right] \di \yy \di
    \xx,
\end{align*}
for $(\vv^0,\vv^1)$ and $(\ww^0,\ww^1)$ in $\calH$. Clearly, the
bilinear form $b$ and the linear map
$(\uu^0,\uu^1) \mapsto  -
   \frac{1}{\abs{Y}} \int_{\Omega} \int_Y \bT^0:\left[\DD(\uu^0) +
  \DD_{\yy}(\uu^1)  \right] \di \yy \di \xx$ are continuous by
H\"{o}lder's inequality. For $(\uu^0, \uu^1)$ in $\calH$, we write
$\uu^0 = (u^0_1, \ldots,u^0_d)$, $\uu^1 = (u^1_1, \ldots, u^1_d)$, and
let $\nn_{\Gamma} = (n_1,\ldots,n_d)$ be the unit normal vector on
$\Gamma$, then:
\begin{align*}
\int_{\Omega} \int_Y \DD(\uu^0) : \DD_{\yy}(\uu^1)\di \yy \di
  \xx
&= \int_{\Omega} \int_Y \frac{\partial}{\partial y_j} \left( u^1_i
        \frac{\partial u^0_i}{\partial x_j} \right)\di \yy \di \xx
= \int_{\Omega} \int_{\partial Y} u^1_i n_j \frac{\partial
   u^0_i}{\partial x_j} \di s_{\yy} \\
&= 0,
\end{align*}
since $\uu^1$ is periodic with respect to $\yy$. Therefore, we infer that: 
\begin{align*}
  \begin{split}
    b((\uu^0,\uu^1),(\uu^0,\uu^1)) 
    &= \frac{2}{ \abs{Y}} \left[
      \norm{\DD(\uu^0)}^2_{L^2(\Omega \times Y)} +
      \norm{\DD_{\yy}(\uu^1)}^2_{L^2(\Omega \times
        Y)}+\phantom{\int_{\Omega} s}\right.\\
    &\phantom{=} \left. \mathrel{+} 2 \int_{\Omega} \int_Y \DD(\uu^0) :
      \DD_{\yy}(\uu^1)\di \yy \di \xx \right]
    \ge C \norm{(\uu^0,\uu^1)}^2_{\calH},
  \end{split}
\end{align*}
and hence, $b$ is coercive.  The Lax-Milgram Theorem is then applied to obtain the existence and uniqueness of $(\uu^0,\uu^1) \in \calH$ - the
solution of \eqref{eq:p374} - for any $(\vec{\phi}^0,\vec{\phi}^1) \in
\calH$. This implies that the full sequence $\uu^\varepsilon$, not just up to a subsequence, two-scale converges to $\uu^0$. 

\item We now show that $p^0$ is unique.
Define: 
\begin{align}
\label{defKL}
\begin{split}
\calK
&\coloneqq 
\left\{(\vv^0,\vv^1) \in H_{0}^{1}(\Omega)^{d} \times L^2 \left(\Omega, H_{\per}^{1}(Y)^d /\RR\right) \colon \right.\\ &\qquad \qquad \qquad\left.
\DD(\vv^0) +\DD_{\yy}(\vv^1) =0 \text{ in } \Omega \times Y_s
\right\},\\
\calL &\coloneqq 
    \left\{
    (q^0,q^1) \in L^2_0(\Omega) \times L^2\left(\Omega, L^2_{\per}(Y)/\RR\right) \colon \right.\\
    &\qquad \qquad \qquad\left.\exists (\vv^0,\vv^1) \in\calK \text{ s.t. } q^0 = \Div \vv^0,~q^1 = \Div_{\yy} \vv^1
    \right\},
\end{split}
\end{align}
and $\innerp{
(q^0,q^1), (p^0,p^1)
}_{\calL}
\coloneqq
\int_{\Omega} q^0(\xx) p^0(\xx) \di \xx
+ \frac{1}{\abs{Y}}\int_{\Omega} \int_{Y} q^1(\xx,\yy) p^1(\xx,\yy) \di \yy \di \xx. $
Then it is clear that $\left( \calK, \innerp{\cdot,\cdot}_{\calH} \right)$ and $\left( \calL, \innerp{\cdot,\cdot}_{\calL}\right)$ are Hilbert spaces. 
For $(\vv^0,\vv^1) \in \calK,$ define $J_b \colon \calK \to \RR$ and $G_b \colon \calK \to \calL$ as follows:
\begin{align*}
J_b(\vv^0,\vv^1) 
&\coloneqq 
b((\vv^0,\vv^1),(\vv^0,\vv^1)) +
   \frac{1}{\abs{Y}} \int_{\Omega} \int_Y \bT^0:\left[\DD(\uu^0) +
  \DD_{\yy}(\uu^1)  \right] \di \yy \di \xx,\\
G_b(\vv^0,\vv^1) &\coloneqq \left(\Div \vv^0, \Div_{\yy} \vv^1\right).
\end{align*}
Since $b$ is symmetric, by a similar argument as in the proof of \cref{sec:coupled-equation-4}, we have: $$(\uu^0,\uu^1) = \argmin_{(\vv^0,\vv^1) \in \calK} J_b(\vv^0,\vv^1),\qquad \text{ subject to } G_b(\vv^0,\vv^1) = 0.$$ 
Clearly, the following holds:
$
\left\langle G'_b(\vv^0,\vv^1), (\ww^0,\ww^1) \right\rangle= \left(\Div \ww^0, \Div_{\yy} \ww^1 \right),
$
where the left hand side is the pairing between the Fr\'{e}chet derivative $G_b'(\vv^0,\vv^1)$ acting on $(\ww^0,\ww^1)$. This yields $G_b \in C^1(\calK, \calL).$
Moreover, $G'_b(\uu^0,\uu^1) \colon \calK \to \calL$ is surjective by construction.
By the Lagrange Multiplier theorem, there exists $(\bar{q},q) \in \calL^* \cong \calL$  such that: 
\begin{align}
\label{lagrange}
    J'_b (\uu^0,\uu^1) + (\bar{q},q) \circ G'_b(\uu^0,\uu^1) = 0,
\end{align}
where $\circ$ denotes the composition of functions.
Testing \eqref{lagrange} against $(\vec{\phi}^0,\vec{\phi}^1) \in \calK$:
\begin{align*}
    &\frac{1}{\abs{Y}} \int_{\Omega} \int_Y \vec{\sigma}^0 : \left[
  \DD(\vec{\phi}^0)+\DD_{\yy}(\vec{\phi}^1)\right] \di \yy \di
  \xx 
  \\
  & \qquad
  +
\frac{1}{\abs{Y}} \int_{\Omega}\int_Y \bT^0:\left[\DD(\vec{\phi}^0) +
  \DD_{\yy}(\vec{\phi}^1)  \right] \di \yy \di \xx \\
  &\qquad{}+{}
  \frac{1}{\abs{Y}} \int_{\Omega} \int_{Y_f} \bar{q} 
  \Div \vec{\phi}^0\di \yy \di
  \xx
  + \frac{1}{\abs{Y}} \int_{\Omega} \int_{Y_f} q 
  \Div_{\yy} \vec{\phi}^1\di \yy \di
  \xx = 0
\end{align*}
Let $\vec{\phi}^0 = 0$ and $\vec{\phi}^1 \in
\mathcal{D} \left( \Omega, C_{\per}^{\infty}(Y_f)^d/\RR
\right)$, extended by zero into $Y_s$, then: 
\begin{align*}
   \frac{1}{\abs{Y}} \int_{\Omega} \int_Y \left[
  \DD(\uu^0)+\DD_{\yy}(\uu^1)\right]  : \DD_{\yy}(\vec{\phi}^1) \di \yy \di
  \xx 
  + \frac{1}{\abs{Y}} \int_{\Omega}\int_Y \bT^0:
  \DD_{\yy}(\vec{\phi}^1)  \di \yy \di \xx\\
  + \frac{1}{\abs{Y}} \int_{\Omega} \int_{Y_f} q 
  \Div_{\yy} \vec{\phi}^1\di \yy \di
  \xx = 0.
\end{align*}
On the other hand, let $\vec{\phi}^0 = 0$ in \eqref{eq:p380}, then for all $\vec{\phi}^1 \in
\mathcal{D} \left( \Omega, C_{\per}^{\infty}(Y)^d/\RR
\right)$:
\begin{align*}
  \begin{split}
\frac{1}{\abs{Y}} \int_{\Omega} \int_Y p^0\II : 
  \DD_{\yy}(\vec{\phi}^1)\di \yy \di
  \xx 
  +\frac{1}{\abs{Y}} \int_{\Omega} \int_Y \left[
  \DD(\uu^0)+\DD_{\yy}(\uu^1)\right]  : \DD_{\yy}(\vec{\phi}^1) \di \yy \di
  \xx
  \\
  =  -
\frac{1}{\abs{Y}} \int_{\Omega}\int_Y \bT^0:
  \DD_{\yy}(\vec{\phi}^1)  \di \yy \di \xx,
  \end{split}
\end{align*}
Therefore, for all $\vec{\phi}^1 \in
\mathcal{D} \left( \Omega, C_{\per}^{\infty}(Y_f)^d/\RR
\right)$, we have
$ 
    \frac{1}{\abs{Y}} \int_{\Omega} \int_{Y_f} q 
  \Div_{\yy} \vec{\phi}^1\di \yy \di
  \xx =
  \frac{1}{\abs{Y}} \int_{\Omega} \int_{Y_f} p^0\II : 
  \DD_{\yy}(\vec{\phi}^1)\di \yy \di
  \xx,
$ 
which yields $q = p^0$ in $\Omega \times Y_f$. Thus $q = p^0$ in $\Omega \times Y$ because both functions are zero inside $Y_s$. 
This implies that the full sequence $p^\varepsilon$, not just up to a subsequence, two-scale converges to $p^0$.
\end{enumerate}
\end{proof}

\begin{remark}
\label{sec:coupled-equation-1}
If the coupling parameter $S \equiv 0$, the proof presented above does, in particular, justify the formal asymptotic results of
\cite{levySuspensionSolidParticles1983} for the case of moving rigid particles in a steady viscous flow.
\end{remark}

\subsection{Cell Problems and Corrector Results}
\label{sec:cell-probl-corr}

We recall the cell problem for the magneto-static equations is presented in 
\eqref{eq:p453}.
These equations, together with \eqref{eq:p452}, \eqref{eq:p455} and
\eqref{eq:p456}, uniquely determine  $\varphi^0$ and $\varphi^1$, thus they also determine $\bT^0$, by \eqref{eq:502}. Moreover, one can write:
\begin{align}
  \label{eq:512}
  \begin{split}
  \bT^0 (\xx,\yy)
  = S \frac{\partial \varphi^0}{\partial x_i}
    \frac{\partial\varphi^0}{\partial x_j} \mu(\yy) \left(
    \left[ \ee^i + \nabla_{\yy} \omega^i \right] \otimes \left[ \ee^j
    + \nabla_{\yy} \omega^j \right] \vphantom{\frac{1}{2}}\right.\\
    \left.
    - \frac{1}{2} \left[ \ee^i +
    \nabla_{\yy} \omega^i \right] \cdot \left[ \ee^j + \nabla_{\yy}
    \omega^j \right] \II  
    \right).
  \end{split}
\end{align}

Equations 
\eqref{eq:p476} and \eqref{eq:496} suggest that it is possible to write $\uu^1$ as a
function of $\uu^0$ and $\varphi^0$.  To achieve this, let us introduce, for $1 \le i,j \le d$, the matrix $\bQ^{ij}$ satisfying: 
\begin{itemize}
\item If $i = j$, then $\bQ^{ii}_{ii} = 1$ and the rest of the entries are zero.
\item If $i \ne j$, then $\bQ^{ij}_{ij} = \bQ^{ij}_{ji} = \frac{1}{2}$
  and the rest of the entries are zero.
\end{itemize}
In short, the above assumptions imply that $\bQ^{ij}_{mn} = \frac{1}{2} \left(
  \delta_{im}\delta_{jn} + \delta_{in}\delta_{jm} \right)$. Now, consider the vector $\bP^{ij}$ defined by $\bP^{ij}_k \coloneqq y_j
\delta_{ik}$.  Observe that $\DD_{\yy} \left( \bP^{ij} \right) = \bQ^{ij}$. Indeed, write $\bP^{ij}=\bP^{ij}_{n} \ee_n = y_j  \delta_{in} \ee_n,$ then $\nabla \bP^{ij} = \frac{\partial}{\partial y_m} (y_j \delta_{in} ) \ee_n \otimes \ee_m = \delta_{jm} \delta_{in} \ee_n \otimes \ee_m$. Therefore, $\left(\nabla \bP^{ij} \right)^{\top} = \delta_{jn} \delta_{im} \ee_n \otimes \ee_m$, so $\DD_{\yy} (\bP^{ij}) = \frac{1}{2} (\delta_{jm} \delta_{in} + \delta_{jn} \delta_{im}) \ee_n \otimes \ee_m.$

Now, we write $\uu^1$ as a function of $\uu^0$ and $\varphi^0$, by setting: 
\begin{align}
\label{eq:497}
  \uu^1(\xx, \yy)
  = -\DD\left(\uu^0(\xx)\right)_{ij} \vec{\chi}^{ij}(\yy) +
  S\frac{\partial \varphi^0}{\partial x_i}(\xx) \frac{\partial
  \varphi^0}{\partial x_j}(\xx) \vec{\xi}^{ij}(\yy),
\end{align}
where $\vec{\chi}^{ij}, \vec{\xi}^{ij} \in H^1_{\per}(Y)^d/\RR$. It follows that: 
\begin{align}
  \label{eq:498}
  \begin{split}
  \DD \left( \uu^0 \right) + \DD_{\yy} \left( \uu^1 \right)
  &= \DD \left( \uu^0 \right)_{ij} \DD_{\yy}\left( \bP^{ij} -
    \vec{\chi}^{ij} \right)
  +  S\frac{\partial \varphi^0}{\partial x_i} \frac{\partial
  \varphi^0}{\partial x_j} \DD_{\yy}\left( \vec{\xi}^{ij} \right).
  \end{split}
\end{align}
Now, let
$
  p^0(\xx,\yy)
  = 2\DD \left( \uu^0(\xx) \right)_{ij} q^{ij}(\yy)
  -  S\frac{\partial \varphi^0}{\partial x_i}(\xx) \frac{\partial
  \varphi^0}{\partial x_j}(\xx) r^{ij}(\yy) + \pi^0(\xx),
  $
where $\pi^0 \in L^2_0(\Omega)$ and $q^{ij}, r^{ij} \in L^2_{\per}(Y)/\RR$. 
The formula, together with
\eqref{eq:498}, yield:
\begin{align*}
  \begin{split}
    \vec{\sigma}^0 &= 2\DD \left( \uu^0\right)_{ij} \left[
       \DD_{\yy} \left( \bP^{ij} - \vec{\chi}^{ij}
      \right) - q^{ij}\II
    \right]
    - \pi^0\II
    + S\frac{\partial \varphi^0}{\partial x_i} \frac{\partial
      \varphi^0}{\partial x_j} \left( \DD_{\yy} \left( \vec{\xi}^{ij}
      \right) + r^{ij}\II \right).
  \end{split}
\end{align*}
Let's define the following matrices: 
\begin{align}
\label{eq:513}
  \calA^{ij}
  &\coloneqq
    \frac{1}{\abs{Y}} \int_Y \DD_{\yy} \left(
    \bP^{ij}-\vec{\chi}^{ij} \right)\di \yy 
    ~\text{ and }~
  \mathcal{B}^{ij}
  \coloneqq
    \frac{1}{\abs{Y}} \int_Y \left( \DD_{\yy}(\vec{\xi}^{ij}) 
    +  \vec{\tau}^{ij}\right)\di \yy,
\end{align}
where 
    $\vec{\tau}^{ij}
    \coloneqq
    \mu (\yy) \left[ (\ee^i + \nabla_{\yy} \omega^i)  \otimes ( \ee^j
    + \nabla_{\yy} \omega^j) - \frac{1}{2} ( \ee^i +
    \nabla_{\yy} \omega^i ) \cdot ( \ee^j + \nabla_{\yy}
    \omega^j ) \II \right]$. 
    
Then, from \eqref{eq:p476}, \eqref{eq:494}, {and  $\Div \left(\int_Y q^{ij}(\yy) \II \di \yy \right) = 0= \Div \left(\int_Y r^{ij}(\yy) \II \di \yy \right),$} we conclude that: In $\Omega$,
\begin{align}
  \label{eq:501}
\Div \left[ 
  2\calA^{ij}\DD \left( \uu^0 \right)_{ij} - \pi^0 \II+ 
  S\mathcal{B}^{ij} \frac{\partial \varphi^0}{\partial x_i}
  \frac{\partial \varphi^0}{\partial x_j}\right]
  = 0
  \quad\text{ and }\quad
  \Div \uu^0 
  = 0.
\end{align}
As it follows from \eqref{eq:501},  $\calA \coloneqq \left\{ \calA^{ij}_{mn}\right\}_{1 \le i,j,m,n \le d}$ is the \emph{effective viscosity}, and it is a fourth rank tensor. 

From \eqref{eq:p477}, \eqref{eq:496}, \eqref{bdr-cell2}, \eqref{eq:a494} and \eqref{bdr-cell3}, the cell problems for $\vec{\chi}^{ij}, \vec{\xi}^{ij} \in H^1_{\per}(Y)^d/\RR$ and
$q^{ij}, r^{ij} \in L^2(Y)/\RR$ are given by: 
\begin{align}
\label{eq:504}
  \begin{split}
    &\Div_{\yy} \left[ \DD_{\yy} \left(\bP^{ij} - \vec{\chi}^{ij}
      \right) + q^{ij}\II \right]
    = 0 \text{ in } Y_f,\\
    &\Div_{\yy} \vec{\chi}^{ij}
    = 0 \text{ in }Y, \qquad
    \DD_{\yy} \left( \bP^{ij} - \vec{\chi}^{ij} \right)
    = 0 \text{ in } Y_s,\\
    &\int_{\Gamma}  \left[
      \DD_{\yy} \left( \bP^{ij} - \vec{\chi}^{ij}
      \right) - q^{ij}\II
    \right] \nn_{\Gamma} \di \Gamma_{\yy} 
    =0=
    \int_{\Gamma}  \left[
       \DD_{\yy} \left( \bP^{ij} - \vec{\chi}^{ij}
      \right) - q^{ij}\II
    \right] \nn_{\Gamma} \times \nn_{\Gamma} \di \Gamma_{\yy},
    \end{split}
    \end{align}
\begin{align}
\label{eq:504b}
  \begin{split}
    &\Div_{\yy} \left[  \DD_{\yy} \left( \vec{\xi}^{ij}
      \right) + r^{ij}\II +  \vec{\tau}^{ij} \right]
    = 0 \text{ in } Y_f,\\
    &\Div_{\yy} \vec{\xi}^{ij}
    = 0 \text{ in }Y, \qquad
    \DD_{\yy} \left( \vec{\xi}^{ij} \right)
    = 0 \text{ in } Y_s,\\
    &\int_{\Gamma}  \left[
      \DD_{\yy} \left( \vec{\xi}^{ij}
      \right) + r^{ij}\II +  \vec{\tau}^{ij}
    \right] \nn_{\Gamma} \di \Gamma_{\yy}  
    =0=
    \int_{\Gamma}  \left[
      \DD_{\yy} \left( \vec{\xi}^{ij}
      \right) + r^{ij}\II +  \vec{\tau}^{ij}
    \right] \nn_{\Gamma} \times \nn_{\Gamma} \di \Gamma_{\yy}.
  \end{split}
\end{align}

\begin{lemma}
\label{regularity-u0}
The coefficients of the effective tensor $\calA$ can be written as:
\begin{align}
\label{eq:a515}
  \calA^{ij}_{mn}
  \coloneqq \frac{1}{\abs{Y}} \int_Y 
  \DD_{\yy}(\bP^{ij}-\vec{\chi}^{ij}) : \DD_{\yy}(\bP^{mn}-\vec{\chi}^{mn})
  \di \yy.
\end{align}
Therefore, $\calA$ is symmetric, i.e. $\calA^{ij}_{mn} = \calA^{mn}_{ij} = \calA^{ji}_{mn} = \calA^{ij}_{nm}$.
Moreover, $\calA$ satisfies the Legendre-Hadamard condition (or strong ellipticity condition), i.e. there exist $\lambda > 0$ such that, for all $\vec{\zeta}, \vec{\eta} \in \RR^d$, one has
$ 
    \calA^{ij}_{mn} \zeta_i \zeta_m \eta_j \eta_n \ge \lambda \abs{\vec{\zeta}}^2 \abs{\vec{\eta}}^2.
$ 
As a consequence, the system \eqref{eq:501} with homogeneous Dirichlet boundary condition has a unique solution $(\uu^0,\pi^0) \in C^1(\bar{\Omega})^d \times C(\bar{\Omega})$ (here $\pi^0$ is defined up to a constant).
\end{lemma}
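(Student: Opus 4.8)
The plan is to argue exactly as in \cref{regularity-phi0}, with the cell problem \eqref{eq:504} now playing the role that \eqref{eq:p453} played there. Throughout I abbreviate $\vec{E}^{ij}\coloneqq\DD_{\yy}(\bP^{ij}-\vec{\chi}^{ij})$, which is a $Y$-periodic symmetric-matrix field vanishing on $Y_s$ by the rigidity constraint in \eqref{eq:504}, and I use that $\DD_{\yy}(\bP^{mn})=\bQ^{mn}$ and that, because $\bQ^{mn}_{kl}=\tfrac12(\delta_{mk}\delta_{nl}+\delta_{ml}\delta_{nk})$, $\vec{E}:\bQ^{mn}=\vec{E}_{mn}$ for every symmetric matrix $\vec{E}$.

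\emph{The identity \eqref{eq:a515}.} I would first rewrite \eqref{eq:513}, with the two remarks above, as
\begin{align*}
  \calA^{ij}_{mn}
  &= \frac{1}{\abs{Y}}\int_Y \vec{E}^{ij} : \DD_{\yy}(\bP^{mn})\di \yy \\
  &= \frac{1}{\abs{Y}}\int_Y \vec{E}^{ij} : \vec{E}^{mn}\di \yy
  + \frac{1}{\abs{Y}}\int_Y \vec{E}^{ij} : \DD_{\yy}(\vec{\chi}^{mn})\di \yy .
\end{align*}
The first summand is the right-hand side of \eqref{eq:a515}, so the task reduces to showing that the cross term vanishes; this is the analogue of ``the boundary term vanishes by periodicity'' in \cref{regularity-phi0}, and is the step I expect to be the main obstacle. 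Since $\vec{E}^{ij}=0$ on $Y_s$, the cross term equals $\frac{1}{\abs{Y}}\int_{Y_f}\vec{E}^{ij}:\DD_{\yy}\vec{\chi}^{mn}\di \yy$; I would integrate by parts over $Y_f$ — the analogue of testing \eqref{eq:p453} against $\omega^k$ — so that the interior terms drop by the cell momentum balance in \eqref{eq:504} together with $\Div_{\yy}\vec{\chi}^{mn}=0$ in $Y$, and the integral over $\partial Y$ drops by the $Y$-periodicity of $\vec{E}^{ij}$, of the cell pressure $q^{ij}$, and of $\vec{\chi}^{mn}$. What remains is an integral over $\Gamma$; there the rigidity constraint forces the trace of $\vec{\chi}^{mn}$ to be $\bP^{mn}$ plus an infinitesimal rigid motion, so that the rigid part is annihilated by the force- and torque-balance identities in \eqref{eq:504}, while the residual affine part is eliminated by undoing the integration by parts over $Y_s$, where the cell stress vanishes because $\vec{E}^{ij}=0$ and $q^{ij}=0$ there (recall $p^{\varepsilon}=0$ on $\Omega_s^{\varepsilon}$ by \cref{sec:coupled-equation-5}). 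The delicate part of the whole argument is precisely this interface bookkeeping on $\Gamma$.

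\emph{Symmetry and strong ellipticity.} These will be immediate consequences. From \eqref{eq:a515} the pairing $(ij),(mn)\mapsto\calA^{ij}_{mn}$ is symmetric, so $\calA^{ij}_{mn}=\calA^{mn}_{ij}$; and because $\bP^{ij}$ and $\bP^{ji}$ have the same symmetric gradient $\bQ^{ij}=\bQ^{ji}$, problem \eqref{eq:504} depends only on this symmetric combination, so $\vec{E}^{ij}=\vec{E}^{ji}$, hence $\calA^{ij}_{mn}=\calA^{ji}_{mn}$ and, combining, $\calA^{ij}_{mn}=\calA^{ij}_{nm}$. For the Legendre--Hadamard bound, $Y$-periodicity of $\vec{\chi}^{ij}$ gives $\int_Y\DD_{\yy}\vec{\chi}^{ij}\di \yy=0$, so $\frac{1}{\abs{Y}}\int_Y\vec{E}^{ij}\di \yy=\bQ^{ij}$; then, for $\vec{\zeta},\vec{\eta}\in\RR^d$, using \eqref{eq:a515}, the symmetry in $i\leftrightarrow j$, Jensen's inequality, and $\zeta_i\eta_j\bQ^{ij}=\tfrac12(\vec{\zeta}\otimes\vec{\eta}+\vec{\eta}\otimes\vec{\zeta})$,
\begin{align*}
  \calA^{ij}_{mn}\zeta_i\zeta_m\eta_j\eta_n
  &= \frac{1}{\abs{Y}}\int_Y \abs{\zeta_i\eta_j\,\vec{E}^{ij}(\yy)}^2\di \yy
  \ge \abs{\frac{1}{\abs{Y}}\int_Y \zeta_i\eta_j\,\vec{E}^{ij}(\yy)\di \yy}^2 \\
  &= \abs{\zeta_i\eta_j\bQ^{ij}}^2
  = \tfrac14\abs{\vec{\zeta}\otimes\vec{\eta}+\vec{\eta}\otimes\vec{\zeta}}^2
  \ge \tfrac12\abs{\vec{\zeta}}^2\abs{\vec{\eta}}^2,
\end{align*}
i.e.\ the condition holds with $\lambda=\tfrac12$. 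The same computation with a generic symmetric matrix $M$ in place of $\vec{\zeta}\otimes\vec{\eta}$ gives $\calA^{ij}_{mn}M_{ij}M_{mn}\ge\abs{M}^2$, so $\calA$ is in fact coercive on symmetric matrices.

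\emph{Well-posedness and $C^1$ regularity.} Finally, coercivity of $\calA$ on symmetric matrices together with Korn's inequality makes the bilinear form $(\vv,\ww)\mapsto\frac{2}{\nRe}\int_\Omega\calA^{ij}_{mn}\DD(\vv)_{ij}\DD(\ww)_{mn}\di \xx$ continuous and coercive on the divergence-free subspace of $H^1_0(\Omega)^d$, so Lax--Milgram, together with the inf--sup (de Rham) recovery of the pressure, yields a unique $(\uu^0,\pi^0)\in H^1_0(\Omega)^d\times L^2_0(\Omega)$ solving \eqref{eq:501} with homogeneous Dirichlet data, $\pi^0$ being determined up to a constant. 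For the stated regularity I would invoke \cref{regularity-phi0}, which gives $\varphi^0\in W^{2,s}(\Omega)$ with $s>4$ and $\nabla\varphi^0\in C^0(\bar\Omega)\cap L^\infty(\Omega)$; hence $\Div\bigl(\mathcal{B}^{ij}\tfrac{\partial\varphi^0}{\partial x_i}\tfrac{\partial\varphi^0}{\partial x_j}\bigr)\in L^s(\Omega)^d$, and since $\bg\in H^1(\Omega)^d\hookrightarrow L^s(\Omega)^d$ the right-hand side of the momentum equation in \eqref{eq:501} lies in $L^s(\Omega)^d$. Because $\calA$ is strongly elliptic and $\Omega$ is of class $C^3$, elliptic regularity for the resulting Stokes-type system then gives $\uu^0\in W^{2,s}(\Omega)^d$ and $\pi^0\in W^{1,s}(\Omega)$; as $s>4\ge d$, the Sobolev embeddings $W^{2,s}(\Omega)\hookrightarrow C^1(\bar\Omega)$ and $W^{1,s}(\Omega)\hookrightarrow C(\bar\Omega)$ complete the proof.
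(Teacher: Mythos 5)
Your proposal is correct and, for the representation \eqref{eq:a515} and the symmetry, follows the same route as the paper: the whole content is the orthogonality $\int_Y \DD_{\yy}(\bP^{ij}-\vec{\chi}^{ij}):\DD_{\yy}(\vec{\chi}^{mn})\di\yy=0$, which the paper obtains in one line by ``testing the first equation of \eqref{eq:504} with $\vec{\chi}^{mn}$ and using the incompressibility condition to eliminate the pressure,'' and which you obtain by spelling out the same integration by parts over $Y_f$ with the interface bookkeeping on $\Gamma$. Your extra detail is welcome, though one step of it is stated loosely: the $\bP^{mn}$-part of the trace of $\vec{\chi}^{mn}$ on $\Gamma$ is not rigid, and disposing of it by ``undoing the integration by parts over $Y_s$'' tacitly uses a zero extension of the cell stress into $Y_s$ (the convention $q^{ij}=0$ there is yours, not the paper's, which only fixes $q^{ij}\in L^2_{\per}(Y)/\RR$); the fluid-side traction on $\Gamma$ need not coincide with that extension's trace, so this is exactly the point where the weak formulation of the cell problem has to be invoked with care — the paper is equally terse here, so this is not a gap relative to the reference proof. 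Where you genuinely diverge is the Legendre--Hadamard bound: the paper shows $\Upsilon\ge 0$, rules out $\Upsilon=0$ on $\partial B(0,1)^2$ by a periodicity contradiction, and extracts a non-explicit $\lambda>0$ by compactness; you instead use $\frac{1}{\abs{Y}}\int_Y\DD_{\yy}(\bP^{ij}-\vec{\chi}^{ij})\di\yy=\bQ^{ij}$ and Jensen's inequality to get the explicit constant $\lambda=\tfrac12$, together with the stronger coercivity $\calA^{ij}_{mn}M_{ij}M_{mn}\ge\abs{M}^2$ on symmetric matrices. This is arguably the better argument — it is the exact analogue of what the paper itself does for $\mu^{\eff}$ in \cref{regularity-phi0}, and the pointwise coercivity on symmetric matrices (not mere strong ellipticity) is what one actually needs, via Korn's inequality, for the well-posedness of \eqref{eq:501} that you then supply by Lax--Milgram and inf--sup; the paper skips that step and cites \cite[Theorem 4.1]{huyExistenceRegularitySolutions2006} for existence, uniqueness, and $H^3\times H^2$ regularity, whereas you cite $L^s$ elliptic theory for $W^{2,s}\times W^{1,s}$ with $s>4$ — both land in $C^1(\bar\Omega)^d\times C(\bar\Omega)$ and both ultimately defer the regularity to an external Stokes-type result.
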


\begin{proof}
We follow the same line of argument as in \cref{regularity-phi0}. 
Testing the first equation of \eqref{eq:504} with $\vec{\chi}^{mn}$, and using the incompressibility condition to eliminate the pressure term, we obtain
$ 
    \int_Y \DD_{\yy} (\bP^{ij}-\vec{\chi}^{ij}) : \DD_{\yy}(\vec{\chi}^{mn}) \di \yy = 0.
$ 
Since $\DD_{\yy}(\bP^{ij}) = \bQ^{ij} = \bQ^{ji} =  \DD_{\yy}(\bP^{ji})$, \eqref{eq:504} implies that $\vec{\chi^{ij}} = \vec{\chi^{ji}}$. By \eqref{eq:513}, we have $\calA^{ij}_{mn} = \calA^{ji}_{mn}$. 
Taking into account with the first equation of \eqref{eq:513}, we obtain \eqref{eq:a515}.
This representation \eqref{eq:a515} and the fact that $\calA^{ij}_{mn} = \calA^{ji}_{mn}$ imply that $\calA$ is symmetric.

Now, for all $\vec{\zeta}, \vec{\eta} \in \RR^d$:
\begin{align*}
\Upsilon (\vec{\zeta}, \vec{\eta}) 
&\coloneqq
 \calA^{ij}_{mn} \zeta_i \zeta_m \eta_j \eta_n 
  = \frac{1}{\abs{Y}} \int_Y 
  \DD_{\yy}(\bP^{ij}-\vec{\chi}^{ij})\zeta_i \eta_j : \DD_{\yy}(\bP^{mn}-\vec{\chi}^{mn})\zeta_m \eta_n 
  \di \yy\\
  &= \frac{1}{\abs{Y}} \int_Y 
  \abs{\DD_{\yy}(\bP^{ij}-\vec{\chi}^{ij})\zeta_i \eta_j }^2 
  \di \yy \ge 0.
\end{align*}
Therefore, $\Upsilon \vert_{\partial B(0,1)^2} (\vec{\zeta},\vec{\eta}) = 0$ if and only if $\DD_{\yy}(\bP^{ij}-\vec{\chi}^{ij}) = 0$ a.e. in $Y$. The latter implies that $\vec{\chi}^{ij} = \bP^{ij} {}+{} \text{constant}$, which is a contradiction, since it wouldn't be a periodic function. Let $\lambda$ be the minimum of the continuous function $\Upsilon$ on the compact set $\partial B(0,1)^2$, then the argument above yields $\lambda > 0$. By scaling, we have $\calA$ is strongly elliptic.
For the last claim, since $\Omega$ is of class $C^3$, $\bg \in H({\Omega})^d$ and $\varphi^0 \in W^{2,4}({\Omega})$,
the solution $(\uu^0, \pi^0)$ of \eqref{eq:501} belongs to $H^3(\Omega)^d \times H^2(\Omega) \subset C^1(\bar{\Omega})^d \times C(\bar{\Omega})$ by \cite[Theorem 4.1]{huyExistenceRegularitySolutions2006}. 
\end{proof}

\begin{lemma}[First order corrector result for the velocity]
\label{sec:cell-probl-corr-2}
Let $\uu^{\varepsilon}, \varphi^{\varepsilon}, p^{\varepsilon}, \uu^0,$ $\varphi^0, p^0, $ $\uu^1, \varphi^1$ be as in \cref{thm:main}. Then: 
\begin{align}
\label{eq:522}
\lim_{\varepsilon \to 0} \norm{\DD(\uu^{\varepsilon})(\cdot) - \DD
  (\uu^0)(\cdot) - \DD_{\yy} (\uu^1) \left( \cdot,
  \frac{\cdot}{\varepsilon} \right)}_{L^2(\Omega)}
  = 0.
\end{align}
\end{lemma}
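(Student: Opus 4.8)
The plan is to mimic the coercivity/energy argument used for the magnetic potential in \cref{sec:magn-equat-3}, exploiting the coercivity of $\DD(\cdot)$ on the relevant function space together with the structural identity \eqref{eq:p438} and the already-established corrector result \eqref{eq:p466} for $\bT(\varphi^\varepsilon)$. First I would set $\ee^\varepsilon(\xx) \coloneqq \DD(\uu^\varepsilon)(\xx) - \DD(\uu^0)(\xx) - \DD_{\yy}(\uu^1)(\xx,\xx/\varepsilon)$ and expand $\norm{\ee^\varepsilon}_{L^2(\Omega)}^2$ into three integrals, exactly as in \eqref{eq:p457}:
\begin{align*}
  \norm{\ee^\varepsilon}^2_{L^2(\Omega)}
  &= \int_\Omega \DD(\uu^\varepsilon):\DD(\uu^\varepsilon)\di\xx
   - 2\int_\Omega \left[\DD(\uu^0)(\xx) + \DD_{\yy}(\uu^1)\left(\xx,\tfrac{\xx}{\varepsilon}\right)\right]:\DD(\uu^\varepsilon)(\xx)\di\xx\\
  &\quad + \int_\Omega \left|\DD(\uu^0)(\xx) + \DD_{\yy}(\uu^1)\left(\xx,\tfrac{\xx}{\varepsilon}\right)\right|^2\di\xx
  \eqqcolon \mathcal{J}_1 + \mathcal{J}_2 + \mathcal{J}_3.
\end{align*}
The point is that each of $\mathcal{J}_1,\mathcal{J}_2,\mathcal{J}_3$ has the same limit as $\varepsilon\to 0$, up to sign and factor, so that the total tends to $0$.

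For $\mathcal{J}_3$ I would use \cref{ave-lem} (regarding $\DD(\uu^0)(\xx):\DD(\uu^0)(\xx)$ and the cross term as admissible test functions, which is legitimate since $\uu^0 \in C^1(\bar\Omega)^d$ by \cref{regularity-u0} and $\uu^1\in L^2(\Omega,H^1_{\per}(Y)^d/\RR)$) to get $\mathcal J_3 \to \frac{1}{|Y|}\int_\Omega\int_Y |\DD(\uu^0) + \DD_{\yy}(\uu^1)|^2\di\yy\di\xx$. For $\mathcal{J}_2$, the two-scale convergence $\DD(\uu^\varepsilon)\tscale \DD(\uu^0)+\DD_{\yy}(\uu^1)$ (which follows from $\nabla\uu^\varepsilon\tscale\nabla\uu^0+\nabla_{\yy}\uu^1$ in \eqref{eq:p439}) applied against the admissible test field $\DD(\uu^0)(\xx) + \DD_{\yy}(\uu^1)(\xx,\yy)$ gives $\mathcal J_2 \to -\frac{2}{|Y|}\int_\Omega\int_Y |\DD(\uu^0)+\DD_{\yy}(\uu^1)|^2\di\yy\di\xx$; here the regularity of $\uu^0$ and the explicit representation $\uu^1 = -\DD(\uu^0)_{ij}\vec\chi^{ij} + S\partial_i\varphi^0\partial_j\varphi^0\,\vec\xi^{ij}$ of \eqref{eq:497}, combined with $\varphi^0\in C^1(\bar\Omega)$, is what makes $\DD_{\yy}(\uu^1)(\xx,\yy)$ an honest element of $L^2_{\per}(Y,C(\bar\Omega))^{d\times d}$ usable as a two-scale test function.

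The crux is $\mathcal{J}_1 = \int_\Omega \DD(\uu^\varepsilon):\DD(\uu^\varepsilon)\di\xx$, which must be shown to converge to $\frac{1}{|Y|}\int_\Omega\int_Y|\DD(\uu^0)+\DD_{\yy}(\uu^1)|^2\di\yy\di\xx$ as well. Here I would take $\vv = \uu^\varepsilon$ itself in \eqref{eq:p438}: since $\Div\uu^\varepsilon = 0$ in $\Omega^\varepsilon_f$ and $\uu^\varepsilon\in\calU^\varepsilon$, the pressure term $\int_\Omega p^\varepsilon\Div\uu^\varepsilon\di\xx$ vanishes, giving $\frac{2}{\nRe}\mathcal J_1 = \frac{1}{\nFr^2}\int_{\Omega^\varepsilon_f}\bg\cdot\uu^\varepsilon\di\xx - \int_\Omega\bT(\varphi^\varepsilon):\DD(\uu^\varepsilon)\di\xx$. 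For the first term, $\uu^\varepsilon\to\uu^0$ strongly in $L^2$ (Rellich, from the uniform $H^1_0$ bound) and $\charac_{\Omega^\varepsilon_f}\wcon |Y_f|/|Y|$ weak-$*$, but more cleanly one passes to the two-scale limit to obtain $\frac{1}{\nFr^2}\cdot\frac{1}{|Y|}\int_\Omega\int_{Y_f}\bg\cdot\uu^0\di\yy\di\xx$. For the second term I combine \eqref{eq:p466} (which says $\bT(\varphi^\varepsilon)\to\bT^0(\cdot,\cdot/\varepsilon)$ in $L^1$, hence the difference tested against the bounded $\DD(\uu^\varepsilon)$ must be handled carefully — actually one should instead use that $\bT(\varphi^\varepsilon)$ two-scale converges to $\bT^0$, a consequence of \cref{sec:magn-equat-3}) with $\DD(\uu^\varepsilon)\tscale\DD(\uu^0)+\DD_{\yy}(\uu^1)$; since one of these two-scale limits is strongly-convergent in the two-scale sense (by Lemma~\ref{sec:magn-equat-3}, $\nabla\varphi^\varepsilon - \nabla\varphi^0 - \nabla_{\yy}\varphi^1(\cdot,\cdot/\varepsilon)\to 0$ strongly in $L^2$, so $\bT(\varphi^\varepsilon)$ is a "strongly two-scale convergent" family in $L^2$ via the $C^1$-bound on $\varphi^0$ and the $L^s$-estimate \eqref{eq:493} with $s>4$), the product passes to the limit: $\int_\Omega\bT(\varphi^\varepsilon):\DD(\uu^\varepsilon)\di\xx \to \frac{1}{|Y|}\int_\Omega\int_Y\bT^0:(\DD(\uu^0)+\DD_{\yy}(\uu^1))\di\yy\di\xx$. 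Finally, the limiting identity so obtained for $\lim\mathcal J_1$ is compared with \eqref{eq:p380}/\eqref{eq:p374} evaluated at $(\vec\phi^0,\vec\phi^1) = (\uu^0,\uu^1)\in\calH$ — which is exactly $\frac{2}{\nRe|Y|}\int_\Omega\int_Y|\DD(\uu^0)+\DD_{\yy}(\uu^1)|^2 = \frac{1}{\nFr^2}\int_\Omega\bg\cdot\uu^0 - \frac{1}{|Y|}\int_\Omega\int_Y\bT^0:(\DD(\uu^0)+\DD_{\yy}(\uu^1))$ — to conclude $\lim\mathcal J_1 = \frac{1}{|Y|}\int_\Omega\int_Y|\DD(\uu^0)+\DD_{\yy}(\uu^1)|^2\di\yy\di\xx$. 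Adding up, $\mathcal J_1 + \mathcal J_2 + \mathcal J_3 \to (1 - 2 + 1)\cdot\frac{1}{|Y|}\int_\Omega\int_Y|\DD(\uu^0)+\DD_{\yy}(\uu^1)|^2\di\yy\di\xx = 0$, which is \eqref{eq:522}.

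The main obstacle I anticipate is the rigorous treatment of the coupling term $\int_\Omega \bT(\varphi^\varepsilon):\DD(\uu^\varepsilon)\di\xx$: here $\bT(\varphi^\varepsilon)$ is only a priori bounded in $L^2$ (via \eqref{eq:493}) while $\DD(\uu^\varepsilon)$ is only bounded in $L^2$, so a naive weak-weak product does not pass to the limit; one genuinely needs the strong corrector convergence of \cref{sec:magn-equat-3} together with the $L^s$-integrability with $s>4$ (as flagged in \cref{sec:homogenization-2}) to upgrade $\bT(\varphi^\varepsilon)$ to a strongly two-scale convergent sequence, after which the product with the weakly two-scale convergent $\DD(\uu^\varepsilon)$ is legitimate. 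A secondary technical point is verifying that $\DD_{\yy}(\uu^1)(\xx,\yy)$, built from the cell correctors $\vec\chi^{ij},\vec\xi^{ij}$ and the $C^1$ macroscopic fields $\uu^0,\varphi^0$, is admissible as a two-scale test function, which rests on \cref{regularity-u0} and \cref{regularity-phi0}.
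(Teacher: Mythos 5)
Your proposal is correct and follows essentially the same route as the paper's proof: the identical three-term expansion of $\norm{\DD(\uu^{\varepsilon})-\DD(\uu^0)-\DD_{\yy}(\uu^1)}_{L^2}^2$, the treatment of $\mathcal{J}_2,\mathcal{J}_3$ via the representation \eqref{eq:497}--\eqref{eq:498}, the regularity from \cref{regularity-u0} and \cref{regularity-phi0}, and \cref{ave-lem}, and the energy identity for $\mathcal{J}_1$ obtained by testing with $\uu^{\varepsilon}$ and comparing with the two-scale limit equation \eqref{eq:p374}. You also correctly pinpoint the one genuinely delicate step, which is exactly how the paper proceeds in \eqref{eq:545}: upgrading the $L^1$ corrector convergence \eqref{eq:p466} of $\bT(\varphi^{\varepsilon})$ to strong $L^2$ convergence using the uniform $L^{s/2}$ bound with $s>4$, so that the product with the merely weakly two-scale convergent $\DD(\uu^{\varepsilon})$ passes to the limit.
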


\begin{proof}
\begin{enumerate}[wide]
\item We write
  \begin{align}
    \label{eq:548}
    2 \norm{\DD(\uu^{\varepsilon}) - \DD(\uu^0) -
    \DD_{\yy}(\uu^1)}^2_{L^2(\Omega)}
    = \mathcal{J}_1 - 2\mathcal{J}_2 + \mathcal{J}_3, \,\,\text{where:}
  \end{align}
\begin{align*}
  \mathcal{J}_1
  &\coloneqq 2 \int_{\Omega} \DD (\uu^{\varepsilon})(\xx) : \DD
    (\uu^{\varepsilon})(\xx) \di \xx,\\
  \mathcal{J}_2
  &\coloneqq 2 \int_{\Omega} \left[ \DD(\uu^0)(\xx) +
    \DD_{\yy}(\uu^1) \left( \xx, \frac{\xx}{\varepsilon}\right)
    \right] : \DD(\uu^{\varepsilon}) (\xx) \di\xx,\\
  \mathcal{J}_3
  &\coloneqq 2\int_{\Omega}  \left[ \DD(\uu^0)(\xx) +
    \DD_{\yy}(\uu^1) \left( \xx, \frac{\xx}{\varepsilon}\right)
    \right] :  \left[ \DD(\uu^0)(\xx) +
    \DD_{\yy}(\uu^1) \left( \xx, \frac{\xx}{\varepsilon}\right)
    \right] \di \xx.
\end{align*}

\item By letting $\vv = \uu^{\varepsilon}$ in \eqref{eq:p436}, we deduce that:
\begin{align}
\label{eq:523}
  \begin{split}
    \lim_{\varepsilon \to 0} \mathcal{J}_1
    &= -
    \lim_{\varepsilon \to 0} \int_{\Omega} \bT (\varphi^{\varepsilon})
    : \DD(\uu^{\varepsilon}) \di \xx.
  \end{split}
\end{align}

Next, we compute the last limit in \eqref{eq:523}. Fix $s>4$ in \cref{sec:coupled-equation-6}.
\begin{itemize}[wide]

\item 
On the one hand, for $1\le i \le d$, by \cite[Corollary
3.5]{zhangInteriorHolderGradient2013a}, we have $\omega^i \in
W^{1,\infty}(Y)$, so $\nabla \omega_i \in
L^{\infty}(Y) \subset L^{s}(Y)$. Let $C = \max_{1\le i \le d}
\norm{\nabla \omega_i}_{L^{s}(Y)},$ then:
\begin{align}
\label{eq:543}
\norm{\nabla \omega^i}_{L^{s}(Y)} \le C \text{ for all } 1 \le
  i \le d.
\end{align}
Let $r = \frac{s}{2} > 2$, then $\bT^0 \left( \cdot, \frac{\cdot}{\varepsilon} \right)$ is
bounded in $L^r(\Omega)$ by \eqref{eq:512} and \cref{eq:543}. Moreover, from \cref{sec:coupled-equation-6} and the definition of $\bT(\varphi^\varepsilon)$ in \eqref{eq:p426}, 
we have
$
  \norm{\bT(\varphi^{\varepsilon})}_{L^{r}(\Omega) }
$ is bounded.
On the other hand, \eqref{eq:p466} and \cite[Theorem
4.9]{brezisFunctionalAnalysisSobolev2011} imply, up to a subsequence, 
$
\lim_{\varepsilon \to 0}  \left(\bT (\varphi^{\varepsilon}(\xx)) -
  \bT^0 \left( \xx,\frac{\xx}{\varepsilon} \right) \right)=0.
$ 
Therefore, by \cite[Exercise 4.16]{brezisFunctionalAnalysisSobolev2011}, we obtain:
\begin{align}
  \label{eq:545}
  \lim_{\varepsilon \to 0} \norm{\bT (\varphi^{\varepsilon}) - \bT^0
  \left( \cdot, \frac{\cdot}{\varepsilon} \right)}_{L^2(\Omega)^{d \times d}} = 0.
\end{align}
Note that this convergence is stronger than \eqref{eq:p466}. This estimate justifies our choice of $s > 4$ in \cref{sec:coupled-equation-6}.

\item By rewriting and taking the limit as $\varepsilon\to0$, we have:
\begin{align}
\label{eq:546}
\begin{split}
    &\int_{\Omega} \bT(\varphi^{\varepsilon})
    : \DD(\uu^{\varepsilon}) \di \xx\\
    &= 
    \int_{\Omega} \left[\bT(\varphi^{\varepsilon})-\bT^0 \left(\xx, \frac{\xx}{\varepsilon}\right) \right]
    : \DD(\uu^{\varepsilon}) \di \xx
    + \int_{\Omega} \bT^0 \left(\xx, \frac{\xx}{\varepsilon}\right) 
    : \DD(\uu^{\varepsilon}) \di \xx\\
    &\xrightarrow[]{\varepsilon \to 0}
    \frac{1}{\abs{Y}} \int_{\Omega} \int_Y \bT^0 (\xx,\yy) : \left[
      \DD(\uu^0) + \DD_{\yy}(\uu^1) \right] \di \yy \di \xx.
\end{split}
\end{align}
Here, the first integral converges to zero due to \eqref{eq:545} and because $\DD(\uu^\varepsilon)$ is bounded; while in the second integral, we regard $\bT^0  \in L^2_{\per}\left(Y, C(\bar{\Omega})\right)$ as a test function for $\DD(\uu^\varepsilon) \tscale \DD(\uu^0) +
  \DD_{\yy}(\uu^1)$.
From \eqref{eq:p476}, \eqref{eq:523} and \eqref{eq:546}, we conclude that: 
\begin{align}
\label{eq:547}
  \lim_{\varepsilon \to 0} \mathcal{J}_1
  = \frac{2}{ \abs{Y}} \int_{\Omega} \int_Y \left[ \DD(\uu^0) +
    \DD_{\yy}(\uu^1) \right] : \left[ \DD(\uu^0) + \DD_{\yy}(\uu^1)
    \right] \di \yy \di \xx.
\end{align}

\end{itemize}

\item 
Substituting \eqref{eq:498} into $\mathcal{J}_2$ and $\mathcal{J}_3$, and using the definition of two-scale convergence for $\mathcal{J}_2$ and \cref{ave-lem} for $\mathcal{J}_3$, we obtain: 
\begin{align}
\label{eq:524}
\begin{split}
  \lim_{\varepsilon \to 0} \mathcal{J}_2 
  &= \lim_{\varepsilon \to 0} \mathcal{J}_3\\
  &=  \frac{2}{ \abs{Y}} \int_{\Omega} \int_Y \left[ \DD(\uu^0) +
    \DD_{\yy}(\uu^1) \right] : \left[ \DD(\uu^0) + \DD_{\yy}(\uu^1)
    \right] \di \yy \di \xx.
\end{split}
\end{align}
Here, we used that $\DD(\uu^0)_{ij} \in C (\bar{\Omega})$ and
$\frac{\partial \varphi^0}{\partial x_i} \in C (\bar{\Omega})$, which
follow from \cref{regularity-u0} and \cref{regularity-phi0}.
Putting \eqref{eq:548}, \eqref{eq:547} and \eqref{eq:524} together, we obtain \eqref{eq:522}.
\end{enumerate}
\end{proof}

Finally, in the following corollary, we synthesize the results of Theorem \ref{thm:main} and the cell problems \eqref{eq:504}-\eqref{eq:504b}.
\begin{corollary} \label{cor:main}
Let $(\varphi^{\varepsilon}, \uu^{\varepsilon},p^{\varepsilon}) \in (H^1(\Omega)/\RR) \times H_0^1(\Omega)^d \times L_0^2(\Omega)$ be the solution of \eqref{eq:p410}. Then: 
    $ 
    \varphi^\varepsilon 
    \rightharpoonup \varphi^0 \text{ in } H^1(\Omega)/\RR,~
    \uu^\varepsilon
    \rightharpoonup \uu^0 \text{ in } H_0^1(\Omega)^d,~
    p^\varepsilon 
    \rightharpoonup \pi^0 \text{ in } L_0^2(\Omega),
    $
where $\varphi^0, \uu^0$ and $\pi^0$ are solutions of: 
\begin{align}
\label{summary-eqn}
\begin{aligned}
    -\Div \left( \mu^{\eff} \nabla\varphi^0 \right) 
    &= {0} &&\text{ in }\Omega,\\
    \left( \mu^{\eff} \nabla\varphi^0 \right) \cdot \nn_{\partial\Omega}
    &= \kk \cdot \nn_{\partial\Omega} &&\text{ on }\partial\Omega,\\
    \Div \left[ 
  2\calA^{ij}\DD \left( \uu^0 \right)_{ij} - {\pi^0} + 
  S\mathcal{B}^{ij} \frac{\partial \varphi^0}{\partial x_i}
  \frac{\partial \varphi^0}{\partial x_j}\right]
  &= 0 &&\text{ in }\Omega,\\
  \Div \uu^0 
  &= 0 &&\text{ in }\Omega,
  \end{aligned}
\end{align}
with $\mu^{\eff}$ defined by \eqref{eq:p456} and $\calA^{ij},$ $\mathcal{B}^{ij}$, $1 \le i,j \le d$ defined in \eqref{eq:513}.
\end{corollary}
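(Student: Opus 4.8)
The plan is to assemble the corollary from material already in hand: the two-scale convergences of \cref{thm:main}, the uniform a priori bounds \eqref{eq:p429} and \cref{sec:coupled-equation-5}, and the explicit cell decompositions \eqref{eq:p452}, \eqref{eq:497} together with the companion formula for $p^0$. First I would upgrade two-scale convergence to weak convergence. Since $\varphi^{\varepsilon} \tscale \varphi^0$ with $\varphi^0$ independent of $\yy$, the standard fact that a two-scale convergent sequence converges weakly in $L^2(\Omega)$ to the $\yy$-average of its limit gives $\varphi^{\varepsilon} \rightharpoonup \varphi^0$ in $L^2(\Omega)$; combined with the uniform bound \eqref{eq:p429} on $\norm{\varphi^{\varepsilon}}_{H^1(\Omega)/\RR}$ and the uniqueness of $\varphi^0$ from \cref{sec:magn-equat-2}, the whole sequence converges weakly in $H^1(\Omega)/\RR$. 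The identical argument, using the a priori estimate of \cref{sec:coupled-equation-5} together with the uniqueness of $\uu^0$ established in \cref{thm:main}, yields $\uu^{\varepsilon} \rightharpoonup \uu^0$ in $H_0^1(\Omega)^d$.

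For the pressure, $p^{\varepsilon} \tscale p^0$ gives $p^{\varepsilon} \rightharpoonup \frac{1}{\abs{Y}}\int_Y p^0(\cdot,\yy)\di \yy$ weakly in $L^2(\Omega)$. Inserting the cell decomposition $p^0(\xx,\yy) = \frac{2}{\nRe}\DD(\uu^0)_{ij} q^{ij}(\yy) - S\frac{\partial \varphi^0}{\partial x_i}\frac{\partial \varphi^0}{\partial x_j} r^{ij}(\yy) + \pi^0(\xx)$ with $q^{ij},\, r^{ij} \in L^2_{\per}(Y)/\RR$, and using that these fluctuation fields have zero $\yy$-mean, the $\yy$-average collapses to $\pi^0(\xx)$, so $p^{\varepsilon} \rightharpoonup \pi^0$. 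Moreover $\pi^0 \in L_0^2(\Omega)$: the representative of $p^0$ in $L^2(\Omega \times Y)/\RR$ has zero mean over $\Omega \times Y$, and the $q^{ij}$ and $r^{ij}$ terms contribute nothing to that mean, whence $\int_{\Omega} \pi^0 \di \xx = 0$.

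It then only remains to record the effective system. The equation $-\Div(\mu^{\eff}\nabla\varphi^0) = 0$ with $(\mu^{\eff}\nabla\varphi^0)\cdot\nn_{\partial\Omega} = \kk\cdot\nn_{\partial\Omega}$ and $\mu^{\eff}$ as in \eqref{eq:p456} is exactly \eqref{eq:p455}, derived in \cref{sec:magn-equat-4} by substituting the ansatz \eqref{eq:p452} into \eqref{eq:p412}--\eqref{eq:p432c}. The momentum balance and incompressibility for $(\uu^0,\pi^0)$ are \eqref{eq:501}, obtained by substituting the cell decomposition \eqref{eq:497} and the above formula for $p^0$ into \eqref{eq:p476} and \eqref{eq:494} and reading off the tensors $\calA^{ij}$, $\mathcal{B}^{ij}$ of \eqref{eq:513}; well-posedness of that system (homogeneous Dirichlet data for $\uu^0$, $\pi^0$ up to a constant) is \cref{regularity-u0}, and that of the $\varphi^0$ problem is \cref{regularity-phi0}. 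No step here is genuinely hard; the only point requiring attention is the identification of the weak pressure limit with $\pi^0$ rather than the full $\yy$-average of $p^0$, which is precisely where the zero-mean normalization of the cell pressures $q^{ij}$, $r^{ij}$ is used.
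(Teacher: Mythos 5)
Your proposal is correct and is essentially the argument the paper intends: the corollary is presented as a synthesis of \cref{thm:main}, the averaging property of two-scale convergence (\cref{ave-lem}), and the cell decompositions \eqref{eq:p452}, \eqref{eq:497} with the associated formula for $p^0$, which is exactly how you assemble it. Your observation that the identification $p^{\varepsilon} \rightharpoonup \pi^0$ hinges on the zero-mean normalization of the cell pressures $q^{ij}, r^{ij} \in L^2_{\per}(Y)/\RR$ is the one genuinely non-automatic point, and you handle it correctly.
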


\section{Conclusions}
\label{sec:conclusions}
The results obtained above in \cref{thm:main} and \cref{cor:main} demonstrate the \emph{effective} response of a viscous fluid with a locally periodic array of paramagnetic/diamagnetic particles suspended in it, given by the system of equations \eqref{eq:p410}. The effective equations are described by \eqref{summary-eqn} in  \cref{cor:main}, with effective coefficients given by \eqref{eq:p456} and \eqref{eq:513}. As evident from the effective system obtained, these effective quantities depend only on the instantaneous position of the particles, their geometry, and the magnetic and flow properties of the original suspension decoded in \eqref{eq:p410}.  It is worth mentioning that this paper is not concerned with \emph{modeling issues} for the colloids with particles whose magnetic properties are described by the linear relation between the magnetic flux density $\emB$ and the magnetic field strength $\emH$ 
suspended in a viscous fluid, in the presence of a external magnetic field, which is an interesting and important topic in itself (see relevant references cited in the Introduction).  In the future, however, the authors intend to continue \emph{analyzing} the \emph{effective behavior} of the suspensions described by more complicated systems, including the nonlinear magnetic relation, the two-way coupling between the flow and the magnetic descriptions of the suspension and perhaps, the interaction between the particles and the Navier-Stokes description of the carrier fluid, whose results will be reported elsewhere.

\section*{Acknowledgments}
The work of the first two authors was partially supported by NSF grant DMS-1350248. This material is based upon work supported by and while serving at the National Science Foundation for the second author Y. Gorb. Any opinion, findings, and conclusions or recommendations expressed in this material are those of the authors and do not necessarily reflect views of the National Science Foundation.

\bibliographystyle{siamplain}
\bibliography{references.bib}

\end{document}